\newcommand\str{\rightarrow}
\newcommand\mj{\mbox{\bf 1}}
 \newtheorem{thm}{Theorem}[section]
 \newtheorem{prop}[thm]{Proposition}
 \newtheorem{lem}[thm]{Lemma}
 \newtheorem{cor}[thm]{Corollary}
 \newtheorem{exm}[thm]{Example}
 \newtheorem{rem}[thm]{Remark}
 \newtheorem{que}[thm]{Question}
 \newtheorem{dfn}{Definition}[section]
\numberwithin{equation}{section}
\title{A simple permutoassociahedron}
\author{Djordje Barali\'{c}, Jelena Ivanovi\'{c} and Zoran Petri\'{c}}
\address{ \scriptsize{Mathematical Institute SANU\\ Knez Mihailova 36, p.f.\ 367\\
        11001 Belgrade, Serbia}}
\email{djbaralic@mi.sanu.ac.rs}
\address{\scriptsize{University of Belgrade, Faculty of Architecture\\ Bulevar Kralja Aleksandra 73/II\\ 11000 Belgrade, Serbia}}
\email{jelena.s.ivanovic@gmail.com}
\address{\scriptsize{Mathematical Institute SANU\\ Knez Mihailova 36, p.f.\ 367\\
        11001 Belgrade, Serbia}}
\email{zpetric@mi.sanu.ac.rs}
\date{}
\begin{document}

\begin{abstract}
In the early 1990s, a family of combinatorial CW-complexes named
permutoassociahedra was introduced by Kapranov, and it was
realised by Reiner and Ziegler as a family of convex polytopes.
The polytopes in this family are ``hybrids'' of permutohedra and
associahedra. Since permutohedra and associahedra are simple, it
is natural to search for a family of simple permutoassociahedra,
which is still adequate for a topological proof of Mac Lane's
coherence. This paper presents such a family.

\vspace{.3cm}

\noindent {\small {\it Mathematics Subject Classification} ({\it
        2000}): 52B11, 18D10, 52B12, 51M20}

\vspace{.5ex}

\noindent {\small {\it Keywords$\,$}: associativity,
    commutativity, coherence, simple polytopes, geometric realisation}
\end{abstract}

\maketitle

\noindent {\small \emph{The paper is dedicated to Jim Stasheff
whose work is the cornerstone of this beautiful mathematics.}}

\section{Introduction}

A geometry of commutativity and associativity interaction is
demonstrated by polytopes called permutoassociahedra. This family
of polytopes arises as a ``hybrid'' of two more familiar
families---permutohedra and associahedra. Kapranov, \cite{K93},
defined a combinatorial CW-complex $KP_n$ (denoted by
$\mathbf{KPA}_{n-1}$ in \cite{RZ94}), whose vertices correspond to
all possible complete bracketings of permuted products of $n$
letters. He showed that for every $n$, this CW-complex is an
$(n-1)$-ball, and found a three dimensional polytope that realises
$KP_4$. Reiner and Ziegler, \cite{RZ94}, realised all these
CW-complexes as convex polytopes.

However, for every $n\geq 4$, the polytope $KP_n$ is not simple.
Our goal is to define a new family of polytopes
$\mathbf{PA}_{n-1}$, having the same vertices as $KP_n$, which are
all simple, and which may still serve as a topological proof of
Mac Lane's coherence.

The combinatorics of the family $\mathbf{PA}_n$ is introduced in
\cite{P14}. This family is called \emph{permutohedron-based
associahedra} in that paper, and for the sake of simplicity, we
call it here \emph{simple permutoassociahedra}. From
\cite[Proposition~10.1]{P14}, it follows that there exist simple
polytopes that realise these combinatorially defined objects.

Our polytopes $\mathbf{PA}_n$ belong to a family that generalises
polytopes called \emph{nestohedra} or \emph{hypergraph polytopes}.
The work of Fulton and MacPherson, \cite{FMP94}, De Concini and
Procesi, \cite{DeCP95}, Stasheff and Shnider, \cite{S97}, Gaiffi,
\cite{Ga03}, \cite{Ga04}, Feichtner and Kozlov, \cite{FK04},
Feichtner and Sturmfels, \cite{FS05}, Carr and Devadoss,
\cite{CD06}, Postnikov, Reiner and Williams, \cite{PRW08},
Postnikov, \cite{P09}, Do\v sen and the third author, \cite{DP10},
and of many others established the family of nestohedra as an
important tool joining algebra, combinatorics, geometry, logic,
topology and some other fields of mathematics.

We pay attention in particular to 0, 1 and 2-faces of polytopes
$\mathbf{PA}_n$. These faces suggest a choice of generators and
corresponding equations relevant for building a symmetric monoidal
category freely generated by a set of objects. All this is
important for Mac Lane's coherence. However, the main result of
the paper is an explicit realisation of the family $\mathbf{PA}_n$
given by systems of inequalities representing halfspaces in
$\mathbf{R}^{n+1}$. Our realisation has its roots in the
realisation of associahedra and cyclohedra given in \cite{S97}.

Throughout the text, the subset relation is denoted by
$\subseteq$, while the \emph{proper} subset relation is denoted by
$\subset$.

\section{Varying the combinatorics by generators}

In this section we try to formalise the main differences between
the approach of \cite{K93}, leading to the family of
permutoassociahedra $KP_n$, and our approach, leading to a family
of simple permutoassociahedra. Roughly speaking, the main
difference is the choice of arrows that generate symmetry in a
symmetric monoidal category. The reader interested mainly in
combinatorics of the new family of polytopes, may skip this
section and just take a look at the commutative diagrams given
below.

The notion of symmetric monoidal categories (not under this name)
is given, and a coherence result is proven by Mac Lane in
\cite{ML63}. (For the definition of symmetric monoidal categories
we refer to \cite[XI.1]{ML71}). The original proof of coherence
relies on a strictification of the monoidal structure of such a
category followed by the standard presentation of symmetric groups
by generators and relations (for details see \cite[\S3.3,
Chapter~5]{DP04}).

Since a coherence result of a class of categories deals with
canonical arrows, it is natural to formulate it in terms of a
category in this class freely generated by a set of objects. This
idea is suggested by Voreadou, \cite{V77}, and it is fairly used
in \cite{DP04}. There is no explicit referring to this approach in
\cite{K93}---however, one may reconstruct the instant one step
proof of Mac Lane's coherence, suggested by the author, through
the symmetric monoidal category freely generated by an infinite
set of objects. Also, this provides us a possibility to compare
the two families of permutoassociahedra.

Given the set $\omega=\{0,1,2,\ldots\}$ of generating objects,
what is a symmetric monoidal category $\mathbf{SM}$ with the
strict unit $I$, freely generated by $\omega$? For the set of
objects of this category, there is no doubt---this set contains
$I$, and all the other objects are built out of elements of
$\omega$ with the help of binary operation denoted here by
$\cdot$, which is the object part of the required bifunctor. For
example, $(2\cdot((4\cdot 2)\cdot 0))$ is an object of this
category and we omit, as usual, the outermost parentheses denoting
this object by $2\cdot((4\cdot 2)\cdot 0)$.

The arrows of $\mathbf{SM}$ are equivalence classes of terms. One
can choose here different languages to build these terms, and
hence to have different equivalence relations, generated by
commutative diagrams, that produce the arrows of $\mathbf{SM}$.

Our reconstruction of the approach of \cite{K93} is that the
\emph{terms} are built inductively in the following manner.

\vspace{2ex}

($\mj$.1) For an object $A$, the \emph{identity} $\mj_A:A\str A$
is an $\mj$-term;

\vspace{1ex}

($\mj$.2) for $f$ and $g$, $\mj$-terms, $(f\cdot g)$ is an
$\mj$-term.

\vspace{2ex}

($\alpha$.1) For objects $A$, $B$ and $C$ distinct from $I$,
$\alpha_{A,B,C}:A\cdot(B\cdot C)\str(A\cdot B)\cdot C$ and
$\alpha^{-1}_{A,B,C}:(A\cdot B)\cdot C\str A\cdot (B\cdot C)$ are
$\alpha$-terms;

\vspace{1ex}

($\alpha$.2) for $f$ an $\alpha$-term and $g$ an $\mj$-term,
$(f\cdot g)$ and $(g\cdot f)$ are $\alpha$-terms.

\vspace{2ex}

($\tau$.1) For $p,q\in\omega$, $\tau_{p,q}:p\cdot q\str q\cdot p$
is a $\tau$-term;

\vspace{1ex}

($\tau$.2) for $f$  a $\tau$-term and $g$ an $\mj$-term, $(f\cdot
g)$ and $(g\cdot f)$ are $\tau$-terms.

\vspace{2ex}

($t$.1) Every $\mj$-term, $\alpha$-term and $\tau$-term is a term;

\vspace{1ex}

($t$.2) if $f:A\str B$ and $g:B\str C$ are terms, then $g\circ
f:A\str C$ is a term.

\vspace{2ex}

\noindent For example,
\[
\alpha^{-1}_{2\cdot 4,1,0}\circ(\alpha_{2,4,1}\cdot
\mj_0)\circ((\mj_2\cdot \tau_{1,4})\cdot \mj_0)\circ
(\alpha^{-1}_{2,1,4}\cdot\mj_0):((2\cdot 1)\cdot 4)\cdot 0\str
(2\cdot 4)\cdot(1\cdot 0)
\]
is a term.

The identities are neutrals for composition. Moreover,
$\mj_A\cdot\mj_B=\mj_{A\cdot B}$ and the following diagrams
commute for $f$, $g$ and $f\cdot(g\cdot h)$ being $\alpha$-terms
or $\tau$-terms.

\begin{center}
\begin{picture}(120,50)

\put(0,40){\makebox(0,0){$A\cdot B$}}
\put(0,10){\makebox(0,0){$A\cdot B'$}}
\put(120,40){\makebox(0,0){$A'\cdot B$}}
\put(120,10){\makebox(0,0){$A'\cdot B'$}}

\put(0,30){\vector(0,-1){10}} \put(120,30){\vector(0,-1){10}}
\put(25,40){\vector(1,0){70}} \put(25,10){\vector(1,0){70}}

\put(-15,25){\small\makebox(0,0){$\mj\cdot g$}}
\put(135,25){\small\makebox(0,0){$\mj\cdot g$}}
\put(60,46){\small\makebox(0,0){$f\cdot\mj$}}
\put(60,4){\small\makebox(0,0){$f\cdot\mj$}}

\put(200,25){\makebox(0,0){(1)}}

\end{picture}
\end{center}

\begin{center}
\begin{picture}(120,50)

\put(0,40){\makebox(0,0){$A\cdot(B\cdot C)$}}
\put(0,10){\makebox(0,0){$A\cdot(B\cdot C)$}}
\put(120,40){\makebox(0,0){$(A\cdot B)\cdot C$}}
\put(120,10){\makebox(0,0){$(A\cdot B)\cdot C$}}

\put(0,30){\vector(0,-1){10}} \put(120,30){\vector(0,-1){10}}
\put(30,40){\vector(1,0){60}} \put(30,10){\vector(1,0){60}}
\put(90,35){\vector(-3,-1){60}}

\put(-10,25){\small\makebox(0,0){$\mj$}}
\put(130,25){\small\makebox(0,0){$\mj$}}
\put(60,45){\small\makebox(0,0){$\alpha$}}
\put(60,5){\small\makebox(0,0){$\alpha$}}
\put(55,29){\small\makebox(0,0){$\alpha^{-1}$}}

\put(200,25){\makebox(0,0){(2)}}

\end{picture}
\end{center}

\begin{center}
\begin{picture}(120,110)

\put(60,100){\makebox(0,0){$A\cdot(B\cdot(C\cdot D))$}}
\put(0,70){\makebox(0,0){$A\cdot((B\cdot C)\cdot D)$}}
\put(0,40){\makebox(0,0){$(A\cdot(B\cdot C))\cdot D$}}
\put(60,10){\makebox(0,0){$((A\cdot B)\cdot C)\cdot D$}}
\put(120,55){\makebox(0,0){$(A\cdot B)\cdot (C\cdot D)$}}

\put(50,90){\vector(-3,-1){30}} \put(0,60){\vector(0,-1){10}}
\put(20,30){\vector(3,-1){30}} \put(70,90){\vector(2,-1){40}}
\put(110,40){\vector(-2,-1){40}}

\put(20,88){\small\makebox(0,0){$\mj\cdot \alpha$}}
\put(-10,55){\small\makebox(0,0){$\alpha$}}
\put(20,22){\small\makebox(0,0){$\alpha\cdot\mj$}}
\put(97,82){\small\makebox(0,0){$\alpha$}}
\put(97,28){\small\makebox(0,0){$\alpha$}}

\put(200,55){\makebox(0,0){(3)}}

\end{picture}
\end{center}

\begin{center}
\begin{picture}(120,50)

\put(0,40){\makebox(0,0){$A\cdot(B\cdot C)$}}
\put(0,10){\makebox(0,0){$A'\cdot(B'\cdot C')$}}
\put(120,40){\makebox(0,0){$(A\cdot B)\cdot C$}}
\put(120,10){\makebox(0,0){$(A'\cdot B')\cdot C'$}}

\put(0,30){\vector(0,-1){10}} \put(120,30){\vector(0,-1){10}}
\put(30,40){\vector(1,0){60}} \put(30,10){\vector(1,0){60}}

\put(-25,25){\small\makebox(0,0){$f\cdot(g\cdot h)$}}
\put(145,25){\small\makebox(0,0){$(f\cdot g)\cdot h$}}
\put(60,45){\small\makebox(0,0){$\alpha$}}
\put(60,5){\small\makebox(0,0){$\alpha$}}

\put(200,25){\makebox(0,0){(4)}}

\end{picture}
\end{center}

\begin{center}
\begin{picture}(120,55)

\put(10,40){\makebox(0,0){$p\cdot q$}}
\put(10,10){\makebox(0,0){$p\cdot q$}}
\put(110,40){\makebox(0,0){$q\cdot p$}}

\put(10,30){\vector(0,-1){10}} \put(30,40){\vector(1,0){60}}
\put(90,35){\vector(-3,-1){60}}

\put(0,25){\small\makebox(0,0){$\mj$}}
\put(60,45){\small\makebox(0,0){$\tau$}}
\put(60,18){\small\makebox(0,0){$\tau$}}

\put(200,25){\makebox(0,0){(5)}}

\end{picture}
\end{center}

\begin{center}
\begin{picture}(120,145)

\put(25,130){\makebox(0,0){$p\cdot (q\cdot r)$}}
\put(-10,106){\makebox(0,0){$(p\cdot q)\cdot r$}}
\put(-30,82){\makebox(0,0){$(q\cdot p)\cdot r$}}
\put(-30,58){\makebox(0,0){$q\cdot(p\cdot r)$}}
\put(-10,34){\makebox(0,0){$q\cdot(r\cdot p)$}}
\put(25,10){\makebox(0,0){$(q\cdot r)\cdot p$}}

\put(95,130){\makebox(0,0){$p\cdot (r\cdot q)$}}
\put(130,106){\makebox(0,0){$(p\cdot r)\cdot q$}}
\put(150,82){\makebox(0,0){$(r\cdot p)\cdot q$}}
\put(150,58){\makebox(0,0){$r\cdot(p\cdot q)$}}
\put(130,34){\makebox(0,0){$r\cdot(q\cdot p)$}}
\put(95,10){\makebox(0,0){$(r\cdot q)\cdot p$}}

\put(-2,28){\vector(2,-1){22}} \put(-26,53){\vector(1,-1){13}}
\put(-30,76){\vector(0,-1){12}} \put(-13,100){\vector(-1,-1){13}}
\put(20,122){\vector(-2,-1){22}} \put(122,28){\vector(-2,-1){22}}
\put(146,52){\vector(-1,-1){13}} \put(150,76){\vector(0,-1){12}}
\put(133,100){\vector(1,-1){13}} \put(100,122){\vector(2,-1){22}}

\put(50,130){\vector(1,0){20}} \put(50,10){\vector(1,0){20}}

\put(60,115){\makebox(0,0){}} \put(60,5){\makebox(0,0){}}

\put(-2,20){\small\makebox(0,0){$\alpha$}}
\put(-32,45){\small\makebox(0,0){$\mj\cdot \tau$}}
\put(-40,70){\small\makebox(0,0){$\alpha^{-1}$}}
\put(-32,95){\small\makebox(0,0){$\tau\cdot\mj$}}
\put(-2,120){\small\makebox(0,0){$\alpha$}}

\put(122,20){\small\makebox(0,0){$\alpha$}}
\put(152,45){\small\makebox(0,0){$\mj\cdot \tau$}}
\put(162,70){\small\makebox(0,0){$\alpha^{-1}$}}
\put(152,95){\small\makebox(0,0){$\tau\cdot\mj$}}
\put(122,120){\small\makebox(0,0){$\alpha$}}

\put(60,137){\small\makebox(0,0){$\mj\cdot \tau$}}
\put(60,3){\small\makebox(0,0){$\tau\cdot\mj$}}

\put(200,70){\makebox(0,0){(6)}}

\end{picture}
\end{center}

All these equations are assumed to \emph{hold in contexts}. For
example, the equation (4) in the context $\cdot D$ reads

\begin{center}
\begin{picture}(120,50)

\put(0,40){\makebox(0,0){$(A\cdot(B\cdot C))\cdot D$}}
\put(-5,10){\makebox(0,0){$(A'\cdot(B'\cdot C'))\cdot D$}}
\put(120,40){\makebox(0,0){$((A\cdot B)\cdot C)\cdot D$}}
\put(125,10){\makebox(0,0){$((A'\cdot B')\cdot C')\cdot D$}}

\put(0,30){\vector(0,-1){10}} \put(120,30){\vector(0,-1){10}}
\put(40,40){\vector(1,0){40}} \put(40,10){\vector(1,0){40}}

\put(-35,25){\small\makebox(0,0){$(f\cdot(g\cdot h))\cdot\mj$}}
\put(155,25){\small\makebox(0,0){$((f\cdot g)\cdot h)\cdot\mj$}}
\put(60,45){\small\makebox(0,0){$\alpha\cdot\mj$}}
\put(60,5){\small\makebox(0,0){$\alpha\cdot\mj$}}

\end{picture}
\end{center}

Out of the category $\mathbf{SM}$, one can build a
(\emph{pseudo})\emph{graph} $\mathcal{G}$ (in the terminology of
\cite{H69}) whose \emph{vertices} are the objects of
$\mathbf{SM}$, while $A$ and $B$ are joined by an \emph{edge} in
$\mathcal{G}$, when there is an $\alpha$-term, or a $\tau$-term
$f:A\str B$. Since every such term has an inverse, the edges are
undirected. There are no multiple edges. However, this graph
contains loops since, for example, there is an edge joining
$p\cdot p$ with itself, witnessed by the term $\tau_{p,p}$.

Every connected component of $\mathcal{G}$ contains the objects
with the same number of occurrences of elements of $\omega$.
Hence, the object $I$, as well as every element of $\omega$, makes
a singleton connected component of $\mathbf{SM}$. For the
coherence question, we are interested in \emph{diversified}
connected components, i.e.\ the connected components containing
the objects of $\mathbf{SM}$ without repeating the elements of
$\omega$. In particular, for every $n\in\omega$, we are interested
in the connected component $\mathcal{G}_n$ of $\mathcal{G}$
containing the object of the form
\[
0\cdot(1\cdot(\ldots\cdot n)\ldots).
\]
It is straightforward to see that $\mathcal{G}_n$ does not contain
loops.

In order to establish the symmetric monoidal coherence (cf.\
\cite[\S 3.3,Chapter~5]{DP04}), it suffices to show that for every
$n\in\omega$, every diagram involving the objects of
$\mathcal{G}_n$ commutes in $\mathbf{SM}$. A topological proof of
this fact consists in finding an $n$-dimensional CW-ball, whose
0-cells are the vertices of $\mathcal{G}_n$, 1-cells are the edges
of $\mathcal{G}_n$, and 2-cells correspond to quadrilaterals (1)
and (4), pentagons (3) and dodecagons (6) given above. This all is
done by Kapranov in \cite{K93}.

Our approach is a bit different. We construct a category
$\mathbf{SM}$* with the same objects as $\mathbf{SM}$, for which
we show that is, up to renaming of arrows, the same
as~$\mathbf{SM}$.

To fix the notation we define the \emph{left}, \emph{right} and
\emph{middle} \emph{contexts} in the following way. The symbol
$\square$ is a left and a right context. If $L$ is a left and $R$
is a right context, while $A$ is an object of $\mathbf{SM}$*, then
$(A\cdot L)$ is a left context, $(R\cdot A)$ is a right context,
and $(R\cdot L)$ is a middle context.

If $L$, $R$ and $M$ are left, right and middle contexts,
respectively, and $a,b\in\omega$, then $La$ denotes the object of
$\mathbf{SM}$* obtained by replacing the occurrence of $\square$
in $L$ by $a$, and we define the objects $bR$ and $bMa$ in the
same way. It is obvious that for every object $A$ of
$\mathbf{SM}$* different from $I$, there exists a unique left
context $L$ and a unique $a\in\omega$, namely the rightmost
occurrence of an element of $\omega$ in $A$, such that $A$ is
$La$. The same holds for right contexts and if $A$ contains at
least one $\cdot$, the same holds for middle contexts.

Our definition of \emph{terms} of $\mathbf{SM}$* is as for
$\mathbf{SM}$ save that the clauses ($\tau$.1) and ($\tau$.2) are
replaced by the clause

\vspace{2ex}

($\sigma$) For objects $La$ and $bR$, $\sigma_{La,bR}:La\cdot
bR\str Lb\cdot aR$ is a $\sigma$-term,

\vspace{2ex}

\noindent and in ($t$.1), ``$\tau$-term'' is replaced by
``$\sigma$-term''.

For example,
\[
\sigma_{2\cdot 1,4\cdot 0}\circ \alpha^{-1}_{2\cdot
1,4,0}:((2\cdot 1)\cdot 4)\cdot 0\str (2\cdot 4)\cdot(1\cdot 0)
\]
is a term.

To define the arrows of $\mathbf{SM}$*, we use the equivalence
relation on terms, which is generated by the commutative diagrams
(1), (2), (3) and (4) (save that $f$, $g$ and $f\cdot(g\cdot h)$
are $\alpha$-terms) and the following commutative diagrams
involving~$\sigma$'s.

\begin{center}
\begin{picture}(120,50)

\put(0,40){\makebox(0,0){$La\cdot bR$}}
\put(0,10){\makebox(0,0){$La\cdot bR$}}
\put(120,40){\makebox(0,0){$Lb\cdot aR$}}

\put(0,32){\vector(0,-1){14}} \put(30,40){\vector(1,0){60}}
\put(90,35){\vector(-3,-1){60}}

\put(-8,25){\small\makebox(0,0){$\mj$}}
\put(60,45){\small\makebox(0,0){$\sigma$}}
\put(60,18){\small\makebox(0,0){$\sigma$}}

\put(200,25){\makebox(0,0){(7)}}

\end{picture}
\end{center}

For the diagram (8), it is assumed that $f:La\str L'a$ and
$g:bR\str bR'$ are $\alpha$-terms and that $f^a_b:Lb\str L'b$ and
$g^b_a:aR\str aR'$ are corresponding $\alpha$-terms obtained by
substituting $b$ for the distinguished occurrence of $a$ in $f$,
and vice versa for $g$.

\begin{center}
\begin{picture}(200,60)(0,-5)

\put(0,40){\makebox(0,0){$La\cdot bR$}}
\put(0,10){\makebox(0,0){$Lb\cdot aR$}}
\put(70,40){\makebox(0,0){$L'a\cdot bR$}}
\put(70,10){\makebox(0,0){$L'b\cdot aR$}}

\put(0,32){\vector(0,-1){14}} \put(70,32){\vector(0,-1){14}}
\put(20,40){\vector(1,0){30}} \put(20,10){\vector(1,0){30}}

\put(-7,25){\small\makebox(0,0){$\sigma$}}
\put(77,25){\small\makebox(0,0){$\sigma$}}
\put(35,47){\small\makebox(0,0){$f\cdot \mj$}}
\put(35,3){\small\makebox(0,0){$f^a_b\cdot \mj$}}

\put(240,25){\makebox(0,0){(8)}}

\put(130,40){\makebox(0,0){$La\cdot bR$}}
\put(130,10){\makebox(0,0){$Lb\cdot aR$}}
\put(200,40){\makebox(0,0){$La\cdot bR'$}}
\put(200,10){\makebox(0,0){$Lb\cdot aR'$}}

\put(130,30){\vector(0,-1){10}} \put(200,30){\vector(0,-1){10}}
\put(150,40){\vector(1,0){30}} \put(150,10){\vector(1,0){30}}

\put(123,25){\small\makebox(0,0){$\sigma$}}
\put(207,25){\small\makebox(0,0){$\sigma$}}
\put(165,47){\small\makebox(0,0){$\mj\cdot g$}}
\put(165,3){\small\makebox(0,0){$\mj\cdot g^b_a$}}

\end{picture}
\end{center}

\begin{center}
\begin{picture}(120,95)

\put(10,82){\makebox(0,0){$La\cdot (bMc\cdot dR)$}}
\put(-20,58){\makebox(0,0){$(La\cdot bMc)\cdot dR$}}
\put(-20,34){\makebox(0,0){$(La\cdot bMd)\cdot cR$}}
\put(10,10){\makebox(0,0){$La\cdot (bMd\cdot cR)$}}

\put(110,82){\makebox(0,0){$Lb\cdot (aMc\cdot dR)$}}
\put(140,58){\makebox(0,0){$(Lb\cdot aMc)\cdot dR$}}
\put(140,34){\makebox(0,0){$(Lb\cdot aMd)\cdot cR$}}
\put(110,10){\makebox(0,0){$Lb\cdot (aMd\cdot cR)$}}

\put(-13,27){\vector(2,-1){22}} \put(-20,52){\vector(0,-1){12}}
\put(10,75){\vector(-2,-1){22}} \put(133,27){\vector(-2,-1){22}}
\put(140,52){\vector(0,-1){12}} \put(110,75){\vector(2,-1){22}}

\put(50,82){\vector(1,0){20}} \put(50,10){\vector(1,0){20}}

\put(-20,21){\small\makebox(0,0){$\alpha^{-1}$}}
\put(-30,45){\small\makebox(0,0){$\sigma$}}
\put(-15,71){\small\makebox(0,0){$\alpha$}}

\put(135,21){\small\makebox(0,0){$\alpha^{-1}$}}
\put(150,45){\small\makebox(0,0){$\sigma$}}
\put(135,71){\small\makebox(0,0){$\alpha$}}

\put(60,89){\small\makebox(0,0){$\sigma$}}
\put(60,3){\small\makebox(0,0){$\sigma$}}

\put(200,50){\makebox(0,0){(9)}}

\end{picture}
\end{center}

\begin{center}
\begin{picture}(120,145)

\put(20,130){\makebox(0,0){$La\cdot (b\cdot cR)$}}
\put(-10,106){\makebox(0,0){$(La\cdot b)\cdot cR$}}
\put(-30,82){\makebox(0,0){$(La\cdot c)\cdot bR$}}
\put(-30,58){\makebox(0,0){$La\cdot (c\cdot bR)$}}
\put(-10,34){\makebox(0,0){$Lc\cdot (a\cdot bR)$}}
\put(20,10){\makebox(0,0){$(Lc\cdot a)\cdot bR$}}

\put(100,130){\makebox(0,0){$Lb\cdot (a\cdot cR)$}}
\put(130,106){\makebox(0,0){$(Lb\cdot a)\cdot cR$}}
\put(150,82){\makebox(0,0){$(Lb\cdot c)\cdot aR$}}
\put(150,58){\makebox(0,0){$Lb\cdot (c\cdot aR)$}}
\put(130,34){\makebox(0,0){$Lc\cdot (b\cdot aR)$}}
\put(100,10){\makebox(0,0){$(Lc\cdot b)\cdot aR$}}

\put(-2,28){\vector(2,-1){22}} \put(-26,53){\vector(1,-1){13}}
\put(-30,76){\vector(0,-1){12}} \put(-13,100){\vector(-1,-1){13}}
\put(20,122){\vector(-2,-1){22}} \put(122,28){\vector(-2,-1){22}}
\put(146,52){\vector(-1,-1){13}} \put(150,76){\vector(0,-1){12}}
\put(133,100){\vector(1,-1){13}} \put(100,122){\vector(2,-1){22}}

\put(50,130){\vector(1,0){20}} \put(50,10){\vector(1,0){20}}

\put(-2,20){\small\makebox(0,0){$\alpha$}}
\put(-32,45){\small\makebox(0,0){$\sigma$}}
\put(-40,70){\small\makebox(0,0){$\alpha^{-1}$}}
\put(-32,95){\small\makebox(0,0){$\sigma$}}
\put(-2,120){\small\makebox(0,0){$\alpha$}}

\put(122,20){\small\makebox(0,0){$\alpha$}}
\put(152,45){\small\makebox(0,0){$\sigma$}}
\put(162,70){\small\makebox(0,0){$\alpha^{-1}$}}
\put(152,95){\small\makebox(0,0){$\sigma$}}
\put(122,120){\small\makebox(0,0){$\alpha$}}

\put(60,137){\small\makebox(0,0){$\sigma$}}
\put(60,3){\small\makebox(0,0){$\sigma$}}

\put(200,70){\makebox(0,0){(10)}}

\end{picture}
\end{center}
The equations (1), (2), (3) and (4) are assumed to hold in
contexts, while the contexts make no sense for the equations (7),
(8), (9) and (10).

It is more or less straightforward to define the $\sigma$-terms by
$\tau$-terms and $\alpha$-terms, and to show by induction that the
equations (7), (8), (9) and (10) hold in $\mathbf{SM}$ for defined
$\sigma$'s. The equations (5), (4) with $f\cdot(g\cdot h)$ being a
$\tau$-term, (1) with $f$, $g$ being $\tau$-terms and (6),
respectively, are essential for this proof. However, if one is not
interested in a new topological proof of Mac Lane's coherence,
then this coherence result instantly delivers the commutativity of
the above diagrams in $\mathbf{SM}$.

For the other direction, the $\tau$-terms are easily defined in
terms of $\sigma$-terms and $\alpha$-terms. Our family of
polytopes (see Section~\ref{realisation}) will guarantee that all
the equations of $\mathbf{SM}$ hold in $\mathbf{SM}$* for defined
$\tau$-terms. Hence, $\mathbf{SM}$ and $\mathbf{SM}$* are, up to
renaming of arrows, the same.

Let $\mathcal{G}^*$ be a (pseudo)graph with the same
\emph{vertices} as $\mathcal{G}$, while $A$ and $B$ are joined by
an \emph{edge} in $\mathcal{G}^*$, when there is an $\alpha$-term,
or a $\sigma$-term $f:A\str B$. As in the case of $\mathcal{G}$,
we denote by $\mathcal{G}^*_n$ the connected component of
$\mathcal{G}^*$ containing the object
\[
0\cdot(1\cdot(\ldots\cdot n)\ldots).
\]
It is evident that $\mathcal{G}^*_n$ is $n$-regular, i.e.\ it has
exactly $n$ edges incident to every vertex.

A graph $G$ is a \emph{graph of a polytope} $P$ when the vertices
and edges of $P$ are in one-to-one correspondence with the
vertices and edges of $G$, and this correspondence respects the
incidence relation. By a result of Blind and Mani-Levitska,
\cite{BM87} (see also Kalai's elegant proof, \cite{K88}), one can
conclude that if there is an $n$-dimensional simple polytope whose
graph is $\mathcal{G}^*_n$, then all such polytopes are
combinatorially equivalent. Our goal is to geometrically realise
such a polytope for every $n$, and to show that its 2-faces
correspond to quadrilaterals (1), (4) and (8), pentagons (3),
octagons (9) and dodecagons (10).

\section{A two-fold nested set construction}\label{two-fold}

A family of simplicial complexes that are closely related to the
face lattices of a family of polytopes, which we call simple
permutoassociahedra, is introduced in this section. Our
construction of this family of simplicial complexes is an example
of the two-fold construction of complexes of nested sets for
simplicial complexes, which is thoroughly investigated in
\cite{P14}. Since we are here interested in a single family of
complexes, there is no reason to repeat the theory of nested set
complexes in its full generality and we present only the facts
necessary for our construction. We refer to \cite{P14} for a
detailed treatment of the theory.

An \textit{abstract simplicial complex} on a set $T$ is a
non-empty collection $K$ of finite subsets of $T$ closed under
taking inclusions, i.e.\ if $\sigma \in K$ and $\tau\subseteq
\sigma$, then $\tau \in K$. The elements of $K$ are called
\emph{simplices}. It is assumed that $\{v\}$ is a simplex of $K$
for every element $v\in T$. An abstract simplicial complex $K$ is
\textit{finite} when the number of its simplices is finite,
otherwise it is \textit{infinite}. In this contribution we
consider only finite abstract simplicial complexes, and we call
them shortly \emph{simplicial complexes}.

A simplex $\tau\in K$ is a \textit{face} of a simplex $\sigma \in
K$ if $\tau\subseteq \sigma$. A simplex $\sigma \in K$ is called
\textit{maximal} when it is not a face of some other simplex from
$K$. If $V_1,\ldots, V_m$ are the maximal simplices of $K$, than
$K$ has a presentation as
\[
K=\mathcal{P}(V_1)\cup\ldots\cup \mathcal{P}(V_m).
\]
Observe that the maximal simplices of a simplicial complex $K$ are
incomparable (with respect to $\subseteq$) finite sets.

\begin{dfn}\label{d1}
A collection $\mathcal{B}$ of non-empty subsets of a finite set
$V$ containing all singletons $\{v\}, v\in V$ and satisfying that
for any two sets $S_1, S_2\in \mathcal{B}$ such that $S_1\cap S_2
\neq\emptyset$, their union $S_1\cup S_2$ also belongs to
$\mathcal{B}$, is called a \textit{building set} of
$\mathcal{P}(V)$.

Let $K$ be a simplicial complex and let $V_1,\ldots,V_m$ be the
maximal simplices of $K$. A collection $\mathcal{B}$ of some
simplices of $K$ is called a \emph{building set} of $K$ when for
every $i=1,\ldots,m$, the collection
$$\mathcal{B}^{V_i}=\mathcal{B} \cap \mathcal{P}(V_i)$$ is a
building set of $\mathcal{P}(V_i)$.
\end{dfn}
\noindent The notion of a building set of a finite-meet
semilattice introduced in \cite[Definition~2.2]{FK04} reduces to
this notion when the semilattice is a simplicial complex (cf.\
\cite[Section~3]{P14}).

For a family of sets $N$, we say that $\{X_1,\ldots,X_m\}\subseteq
N$ is an $N$-\emph{antichain}, when $m\geq 2$ and $X_1,\ldots,X_m$
are mutually incomparable with respect to $\subseteq$.

\begin{dfn}\label{d2}
Let $\mathcal{B}$ be a building set of a simplicial complex $K$.
We say that $N\subseteq \mathcal{B}$ is a \emph{nested set} with
respect to $\mathcal{B}$, when the union of every $N$-antichain is
an element of $K-\mathcal{B}$.
\end{dfn}
\noindent This definition is in accordance with
\cite[Definition~2.7]{FK04} (cf.\ \cite[Section~3]{P14}).

Now we proceed to the main construction of the paper. We start
with the set $X=\{0,\ldots,n\}$, $n\geq 1$, i.e.\ the ordinal
$n+1$. (The case when $n=0$ is trivial.) Let $C_0$ be the
simplicial complex $\mathcal{P}(X)-\{X\}$, i.e.\ the family of
subsets of $X$ with at most $n$ elements, and let
$\mathcal{B}_0=C_0-\{\emptyset\}$. In the literature, $C_0$ is
also known as the \emph{boundary complex} $\partial \Delta^n$ of
the abstract $n$-simplex $\Delta^n$, and obviously $\mathcal{B}_0$
is a building set of the simplicial complex $C_0$ according to
Definition~\ref{d1}.

A set $N\subseteq \mathcal{B}_0$ such that the union of every
$N$-antichain belongs to $C_0-\mathcal{B}_0$ is called
0-\emph{nested}. According to Definition~\ref{d2}, every 0-nested
set is a nested set with respect to $\mathcal{B}_0$. Since
$C_0-\mathcal{B}_0=\{\emptyset\}$, every two members of a 0-nested
set are comparable.


It is evident that a subset of a 0-nested set is a 0-nested
set---thus the family of all 0-nested sets makes a simplicial
complex, which we denote by $C_1$. The maximal 0-nested sets are
of the form
\[
\bigl\{ \{i_n,\ldots,i_1\},\ldots,\{i_n,i_{n-1}\},\{i_n\} \bigr\},
\]
where $i_1,\ldots,i_n$ are mutually distinct elements of $X$. Such
a 0-nested set corresponds to the permutation
\[
i_0 i_1\ldots i_n
\]
of $X$, where $\{i_0\}=X-\{i_1,\ldots,i_n\}$. For example, when
$X=\{0,1,2,3\}$,
\[
\bigl\{ \{1,0,3\},\{1,0\},\{1\} \bigr\}
\]
is a maximal 0-nested set that corresponds to the permutation
$2301$ (see the picture at the end of Section~\ref{realisation}).

We say that a polytope $P$ (\emph{geometrically}) \emph{realises}
a simplicial complex $K$, when the semilattice obtained by
removing the bottom (the empty set) from the face lattice of $P$
is isomorphic to $(K,\supseteq)$. The $n$-dimensional
permutohedron (cf.\ \cite[Definition~4.1]{M66}) realises the
simplicial complex $C_1$. To prove this, one has to rely on
\cite[Proposition~3.4]{P14} and well known facts about nested set
complexes related to permutohedra (cf.\ \cite{FS05}, \cite{CD06},
\cite{P09} and \cite{DP10}).

In this one-to-one correspondence, the vertices of the
permutohedron correspond to the maximal 0-nested sets. Hence,
there are $(n+1)!$ maximal 0-nested sets. Also, the facets of the
permutohedron correspond to the singleton nested sets of the form
$\{A\}$, where $A\in \mathcal{B}_0$. It is more intuitive to
denote the facets of this permutohedron by ordered partitions of
$X$ with two blocks. With singleton nested sets we just economise
a little bit---our nested set $\{A\}$ corresponds to the ordered
partition whose first block is $X-A$ and the second is~$A$.

For a maximal 0-nested set
\[
\bigl\{ \{i_n,\ldots,i_1\},\ldots,\{i_n,i_{n-1}\},\{i_n\} \bigr\},
\]
consider the following \emph{path graph} with $n$ vertices and
$n-1$ edges:
\[
\{i_n,\ldots,i_1\}-\ldots-\{i_n,i_{n-1}\}-\{i_n\}.
\]
We say that a set of vertices of this graph is \emph{connected},
when this is the set of vertices of a connected subgraph of this
graph.

Let $\mathcal{B}_1\subseteq C_1$ be the family of all sets of the
form
\[
\bigl\{
\{i_{k+l},\ldots,i_k,\ldots,i_1\},\ldots,\{i_{k+l},\ldots,i_k,i_{k-1}\},\{i_{k+l},\ldots,i_k\}
\bigr\},
\]
where $1\leq k\leq k+l\leq n$ and $i_1,\ldots,i_{k+l}$ are
mutually distinct elements of $X$, i.e.\ $\mathcal{B}_1$ is the
set of all non-empty connected sets of vertices of the path graphs
that correspond to all maximal 0-nested sets. According to
Definition~\ref{d1}, we have that $\mathcal{B}_1$ is a building
set of the simplicial complex $C_1$.

\begin{rem}
Every member of $C_1-\mathcal{B}_1$ is of the form
$\{A_1,\ldots,A_r\}$, where
\[
2\leq r\leq n-1 \;\&\; X\supset A_1\supset\ldots\supset A_r \;\&\;
\exists i \:|A_i-A_{i+1}| \geq 2.
\]
\end{rem}

\begin{exm}\label{e1}
For $X=\{0,1,2\}$, we have that $\mathcal{B}_1$ is
\[
\Bigl\{
\bigl\{\{0\}\bigr\},\;\bigl\{\{1\}\bigr\},\;\bigl\{\{2\}\bigr\},\;
\bigl\{\{0,1\}\bigr\},\; \bigl\{\{0,2\}\bigr\},\;
\bigl\{\{1,2\}\bigr\} ,
\]
\[\bigl\{\{0,1\},\{0\}\bigr\},\; \bigl\{\{0,2\},\{0\}\bigr\},\;
\bigl\{\{0,1\},\{1\}\bigr\},
\]
\[
\bigl\{\{1,2\},\{1\}\bigr\},\; \bigl\{\{0,2\},\{2\}\bigr\},\;
\bigl\{\{1,2\},\{2\}\bigr\} \Bigr\}.
\]
\end{exm}

A set $N\subseteq \mathcal{B}_1$ is 1-\emph{nested} when the union
of every $N$-antichain belongs to $C_1-\mathcal{B}_1$. According
to Definition~\ref{d2}, every 1-nested set is a nested set with
respect to $\mathcal{B}_1$. Again, it is evident that the family
of all 1-nested sets makes a simplicial complex, which we denote
by $C$. As a corollary of the following proposition, we have that
$C$ is a flag complex.

\begin{prop}\label{p2}
A set $N\subseteq \mathcal{B}_1$ is 1-nested when the union of
every two incomparable elements of $N$ belongs to
$C_1-\mathcal{B}_1$.
\end{prop}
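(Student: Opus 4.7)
The forward implication is trivial: any pair of incomparable elements of $N$ is itself an $N$-antichain, so its union belongs to $C_1-\mathcal{B}_1$ by the 1-nested condition. For the converse, assume the union of every pair of incomparable members of $N$ lies in $C_1-\mathcal{B}_1$, and let $\{Y_1,\ldots,Y_m\}\subseteq N$, $m\geq 2$, be an arbitrary $N$-antichain. Writing $Z=Y_1\cup\cdots\cup Y_m$, the goal splits in two: show (i) $Z\in C_1$, i.e.\ $Z$ is a chain in $\mathcal{B}_0$, and (ii) $Z\notin\mathcal{B}_1$, i.e.\ this chain is not contiguous.

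Claim (i) is immediate. Given $A\in Y_i$ and $B\in Y_j$, the set $Y_i\cup Y_j$ lies in $C_1$ (trivially when one of $Y_i,Y_j$ is contained in the other, by hypothesis otherwise), hence is a chain, so $A$ and $B$ are comparable. For (ii), suppose for contradiction that $Z\in\mathcal{B}_1$, so $Z=\{Z_1\supset Z_2\supset\cdots\supset Z_s\}$ with $|Z_k|-|Z_{k+1}|=1$. Since each $Y_i\in\mathcal{B}_1$ is itself a contiguous chain sitting inside the contiguous chain $Z$, and $Z$ contains exactly one element of each size in its range, one has $Y_i=\{Z_{a_i},Z_{a_i+1},\ldots,Z_{b_i}\}$ for some $1\leq a_i\leq b_i\leq s$. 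Incomparability of the $Y_i$ is equivalent to no interval $[a_i,b_i]$ being contained in another, so after re-indexing one may assume $a_1<a_2<\cdots<a_m$, and this forces $b_1<b_2<\cdots<b_m$. The equality $\bigcup_i[a_i,b_i]=[1,s]$ now forces $a_{k+1}\leq b_k+1$ for every $k<m$: otherwise $b_k+1$ lies outside every $[a_i,b_i]$. Taking $k=1$, the union $Y_1\cup Y_2=\{Z_{a_1},\ldots,Z_{b_2}\}$ is a contiguous chain, hence lies in $\mathcal{B}_1$. But $Y_1$ and $Y_2$ are incomparable, so the pair-hypothesis demands $Y_1\cup Y_2\in C_1-\mathcal{B}_1$, a contradiction.

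The main obstacle is (ii): one has to convert the combinatorics of incomparable elements of $\mathcal{B}_1$ lying inside a fixed contiguous chain into a statement about integer sub-intervals of $[1,s]$. Once this translation is made, the observation that sub-intervals covering $[1,s]$ must chain together with overlap or adjacency yields a local 2-element sub-antichain whose union is already contiguous, which is exactly what the pair hypothesis rules out.
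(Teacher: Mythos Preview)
Your proof is correct. The approach differs from the paper's in presentation, though the underlying combinatorics is the same. The paper argues directly: from the pair hypothesis it deduces that any two incomparable members $Y_i,Y_j$ of the antichain must be \emph{stacked} contiguous chains separated by a gap of size at least~$2$ (i.e.\ every element of one strictly contains every element of the other, and $|A_{ir}-A_{j1}|\geq 2$). This immediately linearly orders the whole antichain and exhibits $\bigcup\mathcal{A}$ as a chain with gaps, giving both $\bigcup\mathcal{A}\in C_1$ and $\bigcup\mathcal{A}\notin\mathcal{B}_1$ at once. You instead split the conclusion into (i) and (ii), handle (i) the same way, and then prove (ii) by contradiction via an interval reformulation: assuming $Z\in\mathcal{B}_1$, you cover $[1,s]$ by the sub-intervals $[a_i,b_i]$ and locate two adjacent ones whose union is contiguous, violating the pair hypothesis. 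The paper's route is shorter and yields the explicit structure of the antichain as a by-product; your route is more formal and makes the reduction to the two-element case completely explicit.
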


\begin{proof}
Let $\mathcal{A}$ be an $N$-antichain. By our assumption, every
two different members of $\mathcal{A}$ are of the form
\[
\{A_{i1},\ldots,A_{ir}\}\quad{\rm and}\quad
\{A_{j1},\ldots,A_{jq}\},
\]
with $A_{i1}\supset\ldots\supset A_{ir}\supset
A_{j1}\supset\ldots\supset A_{jq}$, and
\[
|A_{ik}-A_{i(k+1)}|=1=|A_{jk}-A_{j(k+1)}|,\quad |A_{ir}-
A_{j1}|\geq 2.
\]
Hence $\mathcal{A}$ can be linearly ordered in a natural way, and
we may conclude that $\bigcup\mathcal{A}$ is a subset of a maximal
0-nested set, which means that it is in $C_1$. The ``gaps'' arose
by provisos $|A_{ir}- A_{j1}|\geq 2$ guarantee that
$\bigcup\mathcal{A}$ is not in~$\mathcal{B}_1$.
\end{proof}

\begin{exm}\label{e8}
For $X=\{0,1,2\}$, we have the following 12 maximal 1-nested sets:
\[\begin{array}{lll}
\Bigl\{\bigl\{\{1,2\},\{2\}\bigr\},\bigl\{\{1,2\}\bigr\}\Bigr\}, &
\Bigl\{\bigl\{\{1,2\},\{2\}\bigr\},\bigl\{\{2\}\bigr\}\Bigr\}, &
\Bigl\{\bigl\{\{0,2\},\{2\}\bigr\},\bigl\{\{2\}\bigr\}\Bigr\},
\\[1ex]
\Bigl\{\bigl\{\{0,2\},\{2\}\bigr\},\bigl\{\{0,2\}\bigr\}\Bigr\}, &
\Bigl\{\bigl\{\{0,2\},\{0\}\bigr\},\bigl\{\{0,2\}\bigr\}\Bigr\}, &
\Bigl\{\bigl\{\{0,2\},\{0\}\bigr\},\bigl\{\{0\}\bigr\}\Bigr\},
\\[1ex]
\Bigl\{\bigl\{\{0,1\},\{0\}\bigr\},\bigl\{\{0\}\bigr\}\Bigr\}, &
\Bigl\{\bigl\{\{0,1\},\{0\}\bigr\},\bigl\{\{0,1\}\bigr\}\Bigr\}, &
\Bigl\{\bigl\{\{0,1\},\{1\}\bigr\},\bigl\{\{0,1\}\bigr\}\Bigr\},
\\[1ex]
\Bigl\{\bigl\{\{0,1\},\{1\}\bigr\},\bigl\{\{1\}\bigr\}\Bigr\}, &
\Bigl\{\bigl\{\{1,2\},\{1\}\bigr\},\bigl\{\{1\}\bigr\}\Bigr\}, &
\Bigl\{\bigl\{\{1,2\},\{1\}\bigr\},\bigl\{\{1,2\}\bigr\}\Bigr\},
\end{array}
\]
and $C$ is the union of power sets of these sets.
\end{exm}

\begin{exm}\label{e2}
If $X=\{0,1,2,3\}$ and $M=\{\{1,0,3\},\{1,0\},\{1\}\}$, then
\[
N=\Bigl\{M,\bigl\{\{1\}\bigr\},\bigl\{\{1,0,3\}\bigr\}\Bigr\},
\quad
\Bigl\{M,\bigl\{\{1,0,3\},\{1,0\}\bigr\},\bigl\{\{1,0,3\}\bigr\}\Bigr\},
\]
\[
\Bigl\{M,\bigl\{\{1,0\},\{1\}\bigr\},\bigl\{\{1\}\bigr\}\Bigr\},
\quad\quad
\Bigl\{M,\bigl\{\{1,0,3\},\{1,0\}\bigr\},\bigl\{\{1,0\}\bigr\}\Bigr\},
\]
\[
{\rm \it
and}\quad\Bigl\{M,\bigl\{\{1,0\},\{1\}\bigr\},\bigl\{\{1,0\}\bigr\}\Bigr\},
\]
are 5 out of 120 maximal 1-nested sets.
\end{exm}

In Example~\ref{e2}, every maximal 1-nested set, except $N$, is
such that all of its members are comparable (it contains no
antichains), while $N$ contains the $N$-antichain
$\{\{\{1\}\},\{\{1,0,3\}\}\}$ whose union is
$\{\{1\},\{1,0,3\}\}\not\in \mathcal{B}_1$.

If $N$ is a 1-nested set, then there is a maximal 0-nested set $M$
such that every member of $N$ is a subset of $M$, i.e.\
$N\subseteq \mathcal{B}_1^M=\mathcal{B}_1\cap \mathcal{P}(M)$. In
this case, we say that $N$ is \emph{derived} from $M$. In
Example~\ref{e2}, the five displayed 1-nested sets are derived
from $M$. If $N$ is a maximal 1-nested set, then there is unique
such $M$, namely the maximal 0-nested set contained in $N$. This
also holds for every 1-nested set that contains a maximal 0-nested
set. Otherwise, if $N$ does not contain a maximal 0-nested set,
then there is not a unique such $M$.

\begin{rem}\label{r1}
Every maximal 0 and 1-nested set is of cardinality $n$.
\end{rem}

\begin{rem}\label{r5}
Every element of $\mathcal{B}_1$ is contained in a maximal
1-nested set.
\end{rem}

For a maximal 0-nested set $M$, let $C^M$ be the \emph{link} of
$M$ in $C$, i.e.\ $C^M$ is the set of subsets not containing $M$
of the elements of $C$ that contain $M$. This simplicial complex
is related to the $(n-1)$-dimensional associahedron (Stasheff
polytope, see \cite{S63} and \cite{S97}) in the same way as the
simplicial complex $C_1$ has been related to the $n$-dimensional
permutohedron.

The vertices of the $(n-1)$-dimensional associahedron correspond
to all the \[c_n=\frac{1}{n+1}\binom{2n}{n}\] different ways of
bracketing a string of $n+1$ letters. The number $c_n$ is the
$n$th \emph{Catalan} number. Two vertices are adjacent if they
correspond to a single application of the associative law. In
general, $k$-faces of the associahedron are in bijection with the
set of correct bracketings of a string of $n+1$ letters with $n-k$
pairs of brackets. Two vertices lie in the same $k$-face if and
only if the corresponding complete bracketings could be reduced,
by removing $k$ pairs of brackets, to the same bracketing of the
string of $n+1$ letters with $n-k$ pairs of brackets. Figure
\ref{s:asn} depicts the $(n-1)$-dimensional associahedra for
$n=2,3$ and $4$.

\begin{figure}[h!h!h!]
\includegraphics[width=0.8\textwidth]{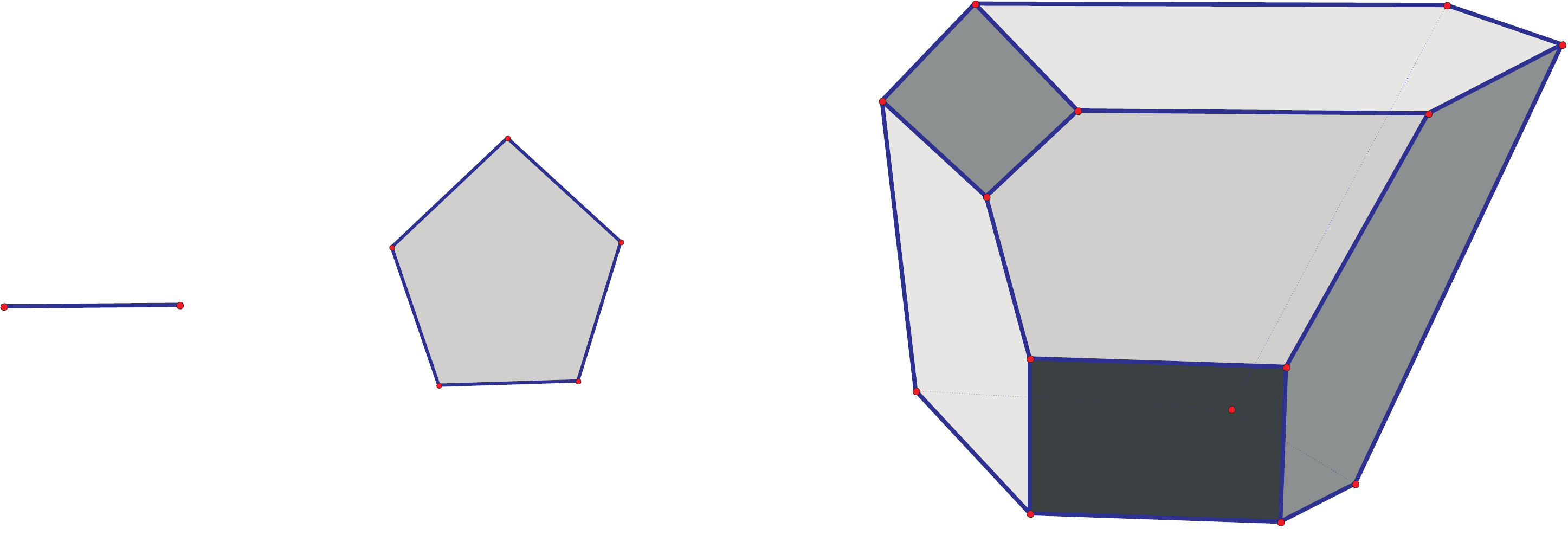}
\caption{$K_3$, $K_4$ and $K_5$} \label{s:asn}
\end{figure}

The $(n-1)$-dimensional associahedron realises $C^M$. This is
again proved by relying on well known facts about nested set
complexes related to associahedra (cf.\ \cite{FS05}, \cite{CD06},
\cite{P09} and \cite{DP10}). Hence, there are $c_n$ maximal
1-nested sets derived from a maximal 0-nested set. From this, one
concludes that there are all in all
\[
\frac{(2n)!}{n!}
\]
maximal 1-nested sets. A correspondence between the maximal
1-nested sets and complete bracketings of permuted products of the
elements of $\{0,\ldots,n\}$ is given in Section~\ref{faces}.

The main question related to the simplicial complex $C$ is the
following.
\begin{que}\label{q1}
Is there an $n$-dimensional realisation of $C$?
\end{que}
The positive answer to this question is given in
Section~\ref{realisation}. But first, in Section~\ref{faces}, we
show that if such a polytope exists, then its graph is
$\mathcal{G}^*_n$ and the 2-faces of this polytope correspond to
the diagrams (1), (4), (8), (3), (9) and~(10).

\section{The 0, 1 and 2-faces of $C$}\label{faces}

In order to establish the correspondence announced above, we
investigate the elements of the simplicial complex $C$ that should
correspond to the vertices, edges and two dimensional faces of a
polytope, which provides the positive answer to Question~\ref{q1}.
Respectively, we call these elements of $C$ the 0-faces, 1-faces
and 2-faces.

\setcounter{subsection}{-1}

\subsection{0-faces}

The 0-\emph{faces} of $C$ are the maximal 1-nested sets. Every
0-face corresponds to a unique complete bracketing of a permuted
product of the elements of $\{0,\ldots,n\}$ in the following way.
Let $V$ be a 0-face derived from
\[
M=\bigl\{\{i_n,\ldots,i_1\},\ldots,\{i_n,i_{n-1}\},\{i_n\}\bigr\}.
\]
We start with $n+1$ place holders separated by $n$ dots, which are
labelled by the elements of $M$ in the descending (with respect to
$\subseteq$) order:
\[
\underline{\hspace{1em}} \stackrel{\{i_n,\ldots,i_1\}}{\cdot}
\underline{\hspace{1em}} \ldots \underline{\hspace{1em}}
\stackrel{\{i_n,i_{n-1}\}}{\cdot} \underline{\hspace{1em}}
\stackrel{\{i_n\}}{\cdot} \underline{\hspace{1em}}.
\]
Each label denotes the elements of $X$ to the right of the
labelled dot. Hence we have the product:
\[
i_0 \stackrel{\{i_n,\ldots,i_1\}}{\cdot} i_1
\stackrel{\{i_n,\ldots,i_2\}}{\cdot} \ldots
\stackrel{\{i_n,i_{n-1}\}}{\cdot} i_{n-1}
\stackrel{\{i_n\}}{\cdot} i_n,
\]
where $\{i_0\}=X-\{i_1,\ldots,i_n\}$. Eventually, for every
element $A$ of $V$, one has to insert the pair of brackets that
include the labels contained in $A$.

\begin{exm}\label{e3}
For $X=\{0,1,2,3\}$, $M=\{\{1,0,3\},\{1,0\},\{1\}\}$, and
$V=\{M,\{\{1\}\},$\\ $\{\{1,0,3\}\}\}$, we have that $V$
corresponds to the following complete bracketing
\[
((2 \stackrel{\{1,0,3\}}{\cdot} 3 )\stackrel{\{1,0\}}{\cdot} (0
\stackrel{\{1\}}{\cdot} 1)).
\]
In the picture given at the end of Section~\ref{realisation}, the
0-face $V$ corresponds to the vertex incident to the pentagon
covering the vertex labelled by 2301, dodecagon covering the label
$\{1\}$ and dodecagon covering the label $\{0,1,3\}$.
\end{exm}

Also, for every complete bracketing $P$ of a product $i_0\cdot
i_1\cdot\ldots\cdot i_n$, one can find a unique 0-face of $C$ that
corresponds to $P$. Hence, the 0-faces of $C$ are in one-to-one
correspondence with the vertices of $\mathcal{G}^*_n$.

\begin{rem}\label{ziegler}
Since the face lattice of a simple polytope is completely
determined by the bipartite graph of incidence relation between
the vertex set and the set of facets, we have the following
alternative way to define the lattice structure, which may replace
our simplicial complex $C$. This is suggested to us by G.M.\
Ziegler.

As the vertex set take all the complete bracketings of permuted
products of $X=\{0,\ldots,n\}$, and as the set of facets take all
the ordered partitions of $X$ into $k$ blocks, $k\geq 2$, where
only the first and the last block are allowed to contain more than
one element. A vertex $V$, whose underlying permutation is $i_0
i_1\ldots i_n$, and a facet $F$ are incident when there is a pair
of corresponding brackets in $V$ with $i_j$ being the leftmost and
$i_k$ being the rightmost element in the scope of these brackets,
such that $F$ is the ordered partition of $X$ whose first block is
$\{i_0,\ldots,i_j\}$ followed by $k-j-1$ singleton blocks
$\{i_{j+1}\},\ldots,\{i_{k-1}\}$, and the last block is
$\{i_k,\ldots,i_n\}$.

If we assign to an element of $\mathcal{B}_1$ of the form
\[
\bigl\{ \{i_{k+l},\ldots,i_k,\ldots,i_1\},\ldots,
\{i_{k+l},\ldots,i_k,i_{k-1}\},\{i_{k+l},\ldots,i_k\} \bigr\},
\]
the ordered partition of $X$ whose first block is
$X-\{i_{k+l},\ldots,i_k,\ldots,i_1\}$, the second block is
$\{i_1\}$, and so on up to the penultimate block, which is
$\{i_{k-1}\}$, while the last block is $\{i_{k+l},\ldots,i_k\}$,
then the above analysis of 0-faces explains the incidence relation
given in this remark.
\end{rem}

\subsection{1-faces}

The 1-\emph{faces} of $C$ are obtained from the 0-faces by
removing a single element. There are two main types of 1-faces of
$C$; a 1-face of the \emph{first type} contains a maximal 0-nested
set, and a 1-face of the \emph{second type} is obtained from a
0-face by removing its maximal 0-nested set.

If $E$ is a 1-face of the first type, and
\[
M=\{\{i_n,\ldots,i_1\},\ldots,\{i_n,i_{n-1}\},\{i_n\}\}
\]
is the maximal 0-nested set contained in $E$, then a unique
bracketing of the product $i_0\cdot i_1\cdot\ldots\cdot i_n$, with
one pair of brackets omitted, is assigned to $E$ in the same way
as a complete bracketing has been assigned to a 0-face. There are
exactly two 0-faces that include a 1-face $E$ of the first type.
They correspond to two possible insertions of pairs of brackets in
the bracketing corresponding to~$E$. Hence, every 1-face of the
first type corresponds to an $\alpha$-term.

\begin{exm}\label{e4}
For $X=\{0,1,2,3\}$, $M=\{\{1,0,3\},\{1,0\},\{1\}\}$, and
$E=\{M,\{\{1\}\}\}$, we have that $E$ corresponds to the following
bracketing
\[
(2 \stackrel{\{1,0,3\}}{\cdot} 3 \stackrel{\{1,0\}}{\cdot} (0
\stackrel{\{1\}}{\cdot} 1)).
\]
The 0-face $V$ from Example~\ref{e3} and the 0-face
$V'=\{M,\{\{1,0\},\{1\}\},\{\{1\}\} \}$, which corresponds to the
complete bracketing
\[
(2 \cdot (3 \cdot (0 \cdot 1))),
\]
are the 0-faces that include $E$. In the picture given at the end
of Section~\ref{realisation}, the 1-face $E$ corresponds to the
edge incident to the pentagon covering the vertex labelled by 2301
and dodecagon covering the label $\{1\}$.
\end{exm}

As before, for every bracketing $P$ of a product $i_0\cdot
i_1\cdot\ldots\cdot i_n$, with one pair of brackets omitted, one
can find a unique 1-face of the first type that corresponds
to~$P$.

In order to describe the 1-faces of the second type, we define
some notions, whose more general form is introduced in
\cite{DP10}. A \emph{construction} of a path graph
\[
v_1-v_2-\ldots-v_n
\]
$n\geq 1$, is a subset of $\mathcal{P}(\{v_1,\ldots,v_n\})$
inductively defined as follows:

\vspace{2ex}

(1) if $n=1$, then $\{\{v_1\}\}$ is the construction of this graph
with a single vertex~$v_1$;

\vspace{1ex}

(2) if $n>1$, $1\leq j\leq n$ and $L$, $R$ are constructions of
the path graphs
\[
v_1-\ldots-v_{j-1}\quad{\rm and}\quad v_{j+1}-\ldots-v_n
\]
(if $j=1$, then $L=\emptyset$, and if $j=n$, then $R=\emptyset$),
then
\[
\bigl\{\{v_1,\ldots,v_n\}\bigr\}\cup L\cup R
\]
is a construction of the path graph
\[
v_1-v_2-\ldots-v_n.
\]

\begin{rem}\label{r6}
If $K$ is a construction of $v_1-\ldots-v_n$, then
${\{v_1,\ldots,v_n\}\in K}$.
\end{rem}

\begin{rem}\label{r3}
Every construction of $v_1-\ldots-v_n$ has exactly $n$ elements.
\end{rem}

The following proposition stems from
\cite[Proposition~6.11]{DP10}.
\begin{prop}\label{p1}
$V$ is a 0-face derived from
\[
\bigl\{\{i_n,\ldots,i_1\},\ldots,\{i_n,i_{n-1}\},\{i_n\}\bigr\}
\]
iff $V$ is a construction of the path graph
\[
\{i_n,\ldots,i_1\}-\ldots-\{i_n,i_{n-1}\}-\{i_n\}.
\]
\end{prop}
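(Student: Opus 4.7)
The strategy is to prove both implications by induction on $n$, the length of the path graph associated to $M$. Throughout, write $v_k = \{i_n,\ldots,i_k\}$ for the vertices of the path, so that the elements of $\mathcal{B}_1^M = \mathcal{B}_1 \cap \mathcal{P}(M)$ are exactly the intervals $\{v_{k+l},\ldots,v_k\}$ in this path.

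For the backward direction, suppose $V$ is a construction of the path $v_1-\ldots-v_n$. By the recursive definition and Remark~\ref{r6}, every element of $V$ is a non-empty interval of the path, hence lies in $\mathcal{B}_1^M$, so $V$ is derived from $M$. To verify that $V$ is 1-nested I invoke Proposition~\ref{p2} and check that the union of any two incomparable members of $V$ lies in $C_1\setminus \mathcal{B}_1$. Writing $V = \{\{v_1,\ldots,v_n\}\} \cup L \cup R$ with split point $j$, the top element is comparable with everything; pairs inside $L$ and pairs inside $R$ are handled by the induction hypothesis; and a cross pair $B_1 \in L$, $B_2 \in R$ satisfies $B_1 \subseteq \{v_1,\ldots,v_{j-1}\}$ and $B_2 \subseteq \{v_{j+1},\ldots,v_n\}$, so $B_1\cup B_2$ is a chain in $X$-containment (hence in $C_1$) but jumps across $v_j$, meaning it cannot be a contiguous subchain of any maximal chain in $\mathcal{P}(X)$ and therefore does not belong to $\mathcal{B}_1$. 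Cardinality $|V|=n$ is Remark~\ref{r3}, and combined with Remark~\ref{r1} this forces $V$ to be a maximal 1-nested set.

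For the forward direction, let $V$ be a 0-face derived from $M$. Since $\{v_1,\ldots,v_n\}$ is comparable to every element of $\mathcal{B}_1^M$, adding it to $V$ preserves 1-nestedness by Proposition~\ref{p2}; so by maximality $\{v_1,\ldots,v_n\}\in V$. Let $V' = V\setminus\{\{v_1,\ldots,v_n\}\}$ and let $A_1,\ldots,A_m$ be the maximal elements of $V'$, listed in path order, with $A_i = \{v_{a_i},\ldots,v_{b_i}\}$ of size $k_i = b_i-a_i+1$. Each $V_i = \{B\in V : B\subseteq A_i\}$ inherits 1-nestedness, and any interval inside $A_i$ compatible with $V_i$ is automatically compatible with everything in $V$ outside $A_i$ because gap-separation propagates; hence maximality of $V$ forces $V_i$ to be maximal 1-nested on the subpath $A_i$, so $|V_i| = k_i$ by induction. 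Summing gives $n-1 = |V'| = \sum_i k_i$, while the vertex-count identity
\[
(a_1 - 1) + (n - b_m) + \sum_{i=1}^{m-1}(a_{i+1} - b_i - 1) \;=\; n - \sum_i k_i \;=\; 1,
\]
combined with $a_{i+1}-b_i-1\geq 1$ for each interior gap, forces $m\leq 2$ and pinpoints the geometry: either $m=1$ with $A_1$ abutting an endpoint of the path, or $m=2$ with $A_1$ and $A_2$ abutting the two endpoints and exactly one gap vertex $v_j$ between them. In either case, setting $L$ and $R$ to be the constructions (by induction) obtained from $V\cap\mathcal{P}(A_i)$ exhibits $V$ as a construction with split point $v_j$.

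The main obstacle, which drives the whole argument, is the tight counting step in the forward direction: the inductive equality $|V_i| = k_i$ combined with the identity above is precisely what squeezes $m$ down to at most $2$, matching the top-level bifurcation built into the definition of a construction. A subsidiary but essential point is that each restriction $V_i$ is maximal \emph{on its own subpath}, not merely 1-nested there; this is what licenses the inductive step and in turn relies on the propagation of gap-separation outside $A_i$.
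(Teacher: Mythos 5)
Your argument is correct, but it is worth noting that the paper does not prove Proposition~\ref{p1} at all: it simply derives it from Proposition~6.11 of the cited work of Do\v sen and Petri\'c, so your self-contained induction is a genuinely different (and more informative) route. Both directions are sound. The backward direction correctly combines Proposition~\ref{p2} with the observation that a cross pair from $L$ and $R$ skips the split vertex, and the cardinality argument via Remarks~\ref{r3} and~\ref{r1} is exactly the right way to upgrade ``1-nested with $n$ elements'' to ``maximal.'' In the forward direction, the two load-bearing steps --- (i) gap-separation of the $A_i$ propagates, so global maximality of $V$ forces each $V_i$ to be maximal on its own subpath, and (ii) the count $n-\sum_i k_i=1$ together with interior gaps of size at least $1$ squeezes the configuration down to the two shapes allowed by the definition of a construction --- are both correct and are precisely what makes the induction close. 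Two small points to tighten: the induction hypothesis is applied to subpaths $A_i$ that are not literally instances of the proposition (they are not paths of maximal $0$-nested sets of a smaller ground set), so the statement being inducted on should be phrased abstractly for path graphs, namely that the maximal families of intervals in which every incomparable pair is separated by a gap are exactly the constructions; and the degenerate cases $m=0$ (equivalently $n=1$) and the justification that pairwise incomparable members of a $1$-nested family of intervals must be disjoint \emph{and} non-adjacent (which is what licenses $a_{i+1}-b_i-1\geq 1$) deserve an explicit sentence each. Neither affects the correctness of the argument.
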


From this proposition and the relation between the simplicial
complex $C^M$ and the $(n-1)$-dimensional associahedron mentioned
in Section~\ref{two-fold}, one can extract the following
recurrence relation for the Catalan numbers
\[
c_0=1\quad{\rm and}\quad c_{n+1}=\sum_{i=0}^n c_i c_{n-i}.
\]

For $K\in C$ and $A\in K$, we say that $a\in A$ is
$A$-\emph{superficial} with respect to $K$, when for every $B\in
K$ such that $B\subset A$, we have that $a\not\in B$. The number
of $A$-superficial elements with respect to $K$ is denoted by
$\nu_K(A)$. From the definition of construction and
Proposition~\ref{p1} we have the following.

\begin{rem}\label{r2}
If $K$ is a 0-face, then for every $A\in K$ we have that
$\nu_K(A)=1$.
\end{rem}

Let $E$ be a 1-face of the second type, i.e.\ $E$ is obtained from
a 0-face $V$ by removing its maximal 0-nested set $M$. By
Remark~\ref{r2}, $\nu_V(M)=1$, and hence, $\bigcup E$ is of the
form
\[
\bigl\{\{i_n,\ldots,i_1\},\ldots,\{i_n,\ldots,i_{j-1}\},\{i_n,\ldots,i_{j+1}\},
\ldots,\{i_n\}\bigr\},
\]
which in the limit cases reads
\[
\bigl\{\{i_n,\ldots,i_2\},\ldots,\{i_n\}\bigr\} \quad {\rm or}
\quad \bigl\{\{i_n,\ldots,i_1\},\ldots,\{i_n,i_{n-1}\}\bigr\}.
\]

By repeating the procedure we used for 0-faces and 1-faces of the
first type, starting with $\bigcup E$, the following product (with
two place holders vacant) is obtained
\[
i_0 \stackrel{\{i_n,\ldots,i_1\}}{\cdot} i_1 {\cdot}\ldots {\cdot}
i_{j-2} \stackrel{\{i_n,\ldots,i_{j-1}\}}{\cdot}
\underline{\hspace{1em}} \cdot \underline{\hspace{1em}}
\stackrel{\{i_n,\ldots,i_{j+1}\}}{\cdot} i_{j+1} {\cdot} \ldots
\stackrel{\{i_n\}}{\cdot} i_n,
\]
and a complete bracketing of this product is reconstructed in the
same way as for 0-faces. (Formally, the outermost brackets are
omitted, but we take them for granted.) Note that this product
must contain the following brackets:
\[
(i_0 \stackrel{\{i_n,\ldots,i_1\}}{\cdot} i_1 {\cdot} \ldots
{\cdot} i_{j-2} \stackrel{\{i_n,\ldots,i_{j-1}\}}{\cdot}
\underline{\hspace{1em}}\,) \cdot (\,\underline{\hspace{1em}}
\stackrel{\{i_n,\ldots,i_{j+1}\}}{\cdot} i_{j+1} {\cdot} \ldots
\stackrel{\{i_n\}}{\cdot} i_n).
\]
There are exactly two 0-faces that include a 1-face $E$ of the
second type. They correspond to two possible permutations of
$i_{j-1},i_j$ in the vacant positions of the bracketing
corresponding to~$E$. Hence, every 1-face of the second type
corresponds to a $\sigma$-term.

\begin{exm}\label{e5}
For $X=\{0,1,2,3\}$, and $E=\{\{\{1\}\},\{\{1,0,3\}\}\}$, we have
that $E$ corresponds to the following bracketing
\[
(2 \stackrel{\{1,0,3\}}{\cdot} \underline{\hspace{1em}}\,) \cdot
(\,\underline{\hspace{1em}} \stackrel{\{1\}}{\cdot} 1).
\]
The 0-face $V$ from Example~\ref{e3} and the 0-face
\[
V''=\{\{\{1,3,0\},\{1,3\},\{1\}\},\{\{1\}\},\{\{1,3,0\}\} \},
\]
which corresponds to the complete bracketing
\[
((2 \cdot 0) \cdot (3 \cdot 1)),
\]
are the 0-faces that include $E$. In the picture given at the end
of Section~\ref{realisation}, the 1-face $E$ corresponds to the
edge incident to the dodecagon covering the label $\{1\}$ and
dodecagon covering the label $\{0,1,3\}$.
\end{exm}

As before, for every complete bracketing $P$ of a product with two
vacant positions, one being the rightmost in the left factor of
$P$, and the other being the leftmost in the right factor of $P$,
one can find a unique 1-face of the second type that corresponds
to $P$. This, together with the analogous fact concerning the
1-faces of the first type, entails that the 1-faces of $C$ are in
one-to-one correspondence with the edges of $\mathcal{G}^*_n$.

\subsection{2-faces}

The 2-\emph{faces} of $C$ are obtained from the 1-faces by
removing a single element. There are two main types of 2-faces of
$C$; a 2-face of the \emph{first type} contains a maximal 0-nested
set, and a 2-face of the \emph{second type} does not contain a
maximal 0-nested set.

If $F$ is a 2-face of the first type, and
\[
M=\bigl\{\{i_n,\ldots,i_1\},\ldots,\{i_n,i_{n-1}\},\{i_n\}\bigr\}
\]
is the maximal 0-nested set contained in $F$, then a unique
bracketing of the product $i_0\cdot i_1\cdot\ldots\cdot i_n$, with
two pairs of brackets omitted, is assigned to $F$ in the same way
as a complete bracketing has been assigned to a 0-face. There are
three possible situations:

\vspace{2ex}

(1.1) there is $A\in F$ such that $\nu_F(A)=3$;

\vspace{1ex}

(1.2) there are incomparable $A,B\in F$ such that
$\nu_F(A)=\nu_F(B)=2$;

\vspace{1ex}

(1.3) there are $A, B\in F$ such that $B\subset A$ and
$\nu_F(A)=\nu_F(B)=2$.

\vspace{2ex}

By a straightforward analysis, in the case (1.1), one concludes
that $F$ is included in exactly five 1-faces and five 0-faces and
it corresponds to Mac Lane's pentagon (3). In the cases (1.2) and
(1.3), $F$ is included in exactly four 1-faces (corresponding to
$\alpha$-terms) and four 0-faces. In the case (1.2), $F$
corresponds to a functoriality quadrilateral (1), while in the
case (1.3) it corresponds to a naturality quadrilateral~(4).

\begin{exm}\label{e6}
Let $X=\{0,1,2,3,4,5\}$ and let $M$ be the maximal 0-nested set
\[
\bigl\{\{0,1,2,3,4\},\{0,1,2,3\},\{0,1,2\}, \{0,1\},\{0\}\bigr\}.
\]
The 2-face
$F_1=\{M,\{\{0,1,2,3\},\{0,1,2\},\{0,1\},\{0\}\},\{\{0\}\}\}$,
corresponds to the bracketing $(5\cdot(4\cdot 3\cdot 2\cdot(1\cdot
0)))$.

\begin{center}
\begin{picture}(120,110)

\put(60,100){\makebox(0,0){$5\cdot(4\cdot(3\cdot (2\cdot(1\cdot
0))))$}} \put(-10,70){\makebox(0,0){$5\cdot(4\cdot((3\cdot
2)\cdot(1\cdot 0)))$}}
\put(-10,40){\makebox(0,0){$5\cdot((4\cdot(3\cdot 2))\cdot(1\cdot
0))$}} \put(60,10){\makebox(0,0){$5\cdot(((4\cdot 3)\cdot
2)\cdot(1\cdot 0))$}} \put(130,55){\makebox(0,0){$5\cdot((4\cdot
3)\cdot (2\cdot(1\cdot 0)))$}}

\put(50,90){\line(-3,-1){30}} \put(0,60){\line(0,-1){10}}
\put(20,30){\line(3,-1){30}} \put(70,90){\line(2,-1){40}}
\put(110,40){\line(-2,-1){40}}

\put(-5,88){\makebox(0,0){\scriptsize $5\cdot(4\cdot(3\cdot
2\cdot(1\cdot 0)))$}} \put(-40,55){\makebox(0,0){\scriptsize
$5\cdot(4\cdot(3\cdot 2)\cdot(1\cdot 0))$}}
\put(-5,22){\makebox(0,0){\scriptsize $5\cdot((4\cdot 3\cdot
2)\cdot(1\cdot 0))$}} \put(130,82){\makebox(0,0){\scriptsize
$5\cdot(4\cdot 3\cdot (2\cdot(1\cdot 0)))$}}
\put(130,27){\makebox(0,0){\scriptsize $5\cdot((4\cdot 3)\cdot
2\cdot(1\cdot 0))$}}

\end{picture}
\end{center}

The 2-face
$F_2=\{M,\{\{0,1,2,3,4\},\{0,1,2,3\}\},\{\{0,1\},\{0\}\}\}$,
corresponds to the bracketing $((5\cdot 4\cdot 3)\cdot(2\cdot
1\cdot 0))$ and it is easy to make the corresponding functoriality
quadrilateral.

The 2-face $F_3=\{M,\{\{0,1,2,3\},\{0,1,2\},\{0,1\},\{0\}\},
\{\{0,1\},\{0\}\}\}$ corresponds to the bracketing $(5\cdot
(4\cdot 3\cdot(2\cdot 1\cdot 0)))$ and it is easy to make the
corresponding naturality quadrilateral.

\end{exm}

If $F$ is a 2-face of the second type, then there are two
possibilities
\[
(2.1)\;\; |\bigcup F|=n-1\quad {\rm or}\quad (2.2)\;\; |\bigcup
F|=n-2.
\]
In the case (2.1), a quadrilateral of a form of the diagrams (8)
corresponds to $F$. In the case (2.2), $\bigcup F$ is of the form:
\[
\bigl\{\{i_n,\ldots,i_1\},\ldots,\{i_n,i_{n-1}\},\{i_n\}\bigr\}-
\bigl\{\{i_n,\ldots,i_j\},\{i_n,\ldots,i_k\}\bigr\},
\]
for $1\leq j<k\leq n$, and we distinguish the following
situations:

\vspace{2ex}

(2.2.1) if $k-j>1$, then an octagon of a form of the diagram (9)
corresponds to~$F$;

\vspace{1ex}

(2.2.2) if $k-j=1$, then a dodecagon of a form of the diagram (10)
corresponds to~$F$.

\begin{exm}\label{e7}
For $X=\{0,1,2,3,4,5\}$, the 2-face
\[
F_4=\Bigl\{\bigl\{\{0,1,2,3,4\}\bigr\},\;
\bigl\{\{0,1,2\},\{0,1\},\{0\}\bigr\},\;
\bigl\{\{0\}\bigr\}\Bigr\}
\]
describes the bracketing
$(5\cdot\underline{\hspace{1em}}\,)\cdot(\,\underline{\hspace{1em}}\cdot
2\cdot(1\cdot 0))$.

\begin{center}
\begin{picture}(120,80)(0,-10)

\put(-25,50){\makebox(0,0){$(5\cdot 4)\cdot(3\cdot(2\cdot(1\cdot
0)))$}} \put(-25,10){\makebox(0,0){$(5\cdot
3)\cdot(4\cdot(2\cdot(1\cdot 0)))$}}
\put(145,50){\makebox(0,0){$(5\cdot 4)\cdot((3\cdot 2)\cdot(1\cdot
0))$}} \put(145,10){\makebox(0,0){$(5\cdot 3)\cdot((4\cdot
2)\cdot(1\cdot 0))$}}

\put(-20,40){\line(0,-1){20}} \put(140,40){\line(0,-1){20}}
\put(30,50){\line(1,0){60}} \put(30,10){\line(1,0){60}}

\put(-65,30){\makebox(0,0){\scriptsize $(5\cdot
\underline{\hspace{.5em}}\,)\cdot(\,\underline{\hspace{.5em}}\cdot(2\cdot(1\cdot
0)))$}} \put(185,30){\makebox(0,0){\scriptsize $(5\cdot
\underline{\hspace{.5em}}\,)\cdot((\,\underline{\hspace{.5em}}\cdot
2)\cdot(1\cdot 0))$}} \put(60,60){\makebox(0,0){\scriptsize
$(5\cdot 4)\cdot(3\cdot 2\cdot (1\cdot 0))$}}
\put(60,0){\makebox(0,0){\scriptsize $(5\cdot 3)\cdot(4\cdot
2\cdot (1\cdot 0))$}}

\end{picture}
\end{center}

The 2-face $F_5=\{\{\{0,1,2,3,4\}\},\{\{0,1,2\}\},\{\{0\}\}\}$,
describes the bracketing
\[
(5\cdot \underline{\hspace{1em}}\,) \cdot
(\,\underline{\hspace{1em}} \cdot \underline{\hspace{1em}}\,)\cdot
(\,\underline{\hspace{1em}}\cdot 0),
\]
where the first two vacant
positions are reserved for 3 and 4, while the last two are
reserved for 1 and 2. This face corresponds to an octagon of a
form of the diagram (9).

The 2-face $F_6=\{\{\{0,1,2,3,4\}\},\{\{0,1\},\{0\}\},\{\{0\}\}\}$
describes the bracketing
\[
(5\cdot \underline{\hspace{1em}}\,)\cdot
\underline{\hspace{1em}}\cdot (\,\underline{\hspace{1em}}\cdot
(1\cdot 0))
\]
and it corresponds to a dodecagon of a form of the diagram (10).

\end{exm}

\section{Simple permutoassociahedra}\label{realisation}

In this section we present a polytope $\mathbf{PA}_n$, the
$n$-dimensional \emph{simple permutoassociahedron}, and give the
positive answer to Question~\ref{q1}. We start with the following
notation. For $1\leq k\leq k+l\leq n$, let
\[
\kappa(k,l)=\frac{3^{k+l+1}-3^{l+1}}{2}+\frac{3^k-3k}{3^n-n-1}.
\]
For an element
\[
\beta=\bigl\{\{i_{k+l},\ldots,i_k,\ldots,i_1\},\ldots,\{i_{k+l},\ldots,i_k,i_{k-1}\},\{i_{k+l},\ldots,i_k\}
\bigr\},
\]
of $\mathcal{B}_1$, let $\beta^=$ be the equation
(\emph{hyperplane} in $\mathbf{R}^{n+1}$)
\[
x_{i_1}+2x_{i_2}+\ldots+k(x_{i_k}+\ldots+x_{i_{k+l}})=\kappa(k,l),
\]
and let the \emph{halfspace} $\beta^{\geq}$ and the \emph{open
halfspace} $\beta^>$ be defined as $\beta^=$, save that ``$=$'' is
replaced by ``$\geq$'' and ``$>$'', respectively.

\begin{rem}\label{r4}
We have that $\kappa(k,l)=3^{l+k}+\ldots+3^{l+1}+\varepsilon(k)$,
for
\[
\varepsilon(k)=\frac{3^k-3k}{3^n-n-1}<1.
\]
\end{rem}

For $\pi$ being the hyperplane $x_0+\ldots +x_n=3^{n+1}$ in
$\mathbf{R}^{n+1}$, let
\[
\mathbf{PA}_n=(\bigcap \{\beta^{\geq}\mid \beta\in
\mathcal{B}_1\})\cap \pi.
\]
The rest of this section is devoted to a proof of the following
result.
\begin{thm}\label{t1}
$\mathbf{PA}_n$ is a simple $n$-dimensional polytope that realises
$C$.
\end{thm}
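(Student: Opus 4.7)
The goal is to establish an order-reversing bijection $N \mapsto F_N := \mathbf{PA}_n \cap \bigcap_{\beta\in N}\beta^=$ between the 1-nested sets in $C$ and the non-empty faces of $\mathbf{PA}_n$. Once this is done, simplicity is automatic: by Remark~\ref{r1} each maximal 1-nested set (a 0-face of $C$) has exactly $n$ elements, so each vertex of $\mathbf{PA}_n$ will lie on exactly $n$ facet hyperplanes, making $\mathbf{PA}_n$ an $n$-dimensional simple polytope.

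First, for each maximal 1-nested set $V$ derived from a maximal 0-nested set $M$ with underlying permutation $i_0 i_1 \ldots i_n$, I would define a candidate vertex $v(V)$ as the unique solution of the linear system consisting of $\pi$ together with the $n$ equations $\beta^=$ for $\beta \in V$. By Proposition~\ref{p1}, $V$ is a construction of the path graph of $M$, and the recursive structure of a construction (root plus left and right sub-constructions) lets one solve this system step by step: the full-path root of $V$, combined with $\pi$, pins down $x_{i_0}$, and each subsequent element of $V$ fixes one further coordinate. Non-singularity of the $(n{+}1)\times(n{+}1)$ coefficient matrix then follows by induction on $n$.

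The main technical step is to verify $v(V) \in \mathbf{PA}_n$, i.e., that $v(V)$ satisfies $\gamma^\geq$ strictly for every $\gamma \in \mathcal{B}_1 \setminus V$. The decisive estimate is Remark~\ref{r4}: $\kappa(k,l) = 3^{k+l} + 3^{k+l-1} + \ldots + 3^{l+1} + \varepsilon(k)$ with $0 \leq \varepsilon(k) < 1$. To leading order, $v(V)$ is a perturbation of the standard permutohedron vertex for the permutation $i_0 i_1 \ldots i_n$ (the coordinate at position $i_j$ being of order $3^{n-j}$). The $\varepsilon(k)$-corrections are small enough not to disturb strict inequality when the leading exponential terms already separate, yet they are precisely what distinguishes the $c_n$ vertices derived from a single $M$. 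The case analysis splits naturally according to whether $\gamma$ is a connected subpath of $M$'s path graph (where the $\varepsilon(k)$-term is essential) or relates to a different permutation (where the exponential terms alone suffice).

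Finally, one verifies that the correspondence $V \mapsto v(V)$ exhausts the vertices of $\mathbf{PA}_n$ — the count $(2n)!/n!$ of maximal 1-nested sets is handy here — and that the general face $F_N$ is the convex hull of $\{v(V) : N \subseteq V\}$. Combined with the analysis of 0, 1 and 2-faces given in Section~\ref{faces} and the theorem of Blind and Mani-Levitska, this recovers the full combinatorial structure of $\mathbf{PA}_n$ as $C$, so that the 2-faces are quadrilaterals, pentagons, octagons and dodecagons as required. I expect the chief obstacle to be the systematic verification of $\gamma^>(v(V))$ in the previous paragraph: the particular form of $\kappa(k,l)$, with its $\varepsilon(k)$-correction, is tailor-made for this purpose, but carrying out the case analysis cleanly will require delicate bookkeeping of the ways $\gamma$ can relate to elements of~$V$.
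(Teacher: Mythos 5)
Your forward direction --- solving the linear system attached to a maximal 1-nested set $V$ recursively along the construction structure of Proposition~\ref{p1}, and then checking $\gamma^>$ for all $\gamma\in\mathcal{B}_1\setminus V$ by splitting into $\gamma\in\mathcal{B}_1^M$ versus $\gamma\notin\mathcal{B}_1^M$ --- is essentially the paper's Proposition~\ref{p4}, and your reading of the role of $\varepsilon(k)$ in $\kappa(k,l)$ is accurate. But there is a genuine gap in the converse direction, and it is not a bookkeeping issue. You claim that simplicity is ``automatic'' and that exhausting the vertices can be handled by the count $(2n)!/n!$. The count only gives a \emph{lower} bound on the number of vertices of $\mathbf{PA}_n$: nothing in your argument rules out additional vertices arising as intersections of facet hyperplanes $\beta_1^=,\ldots,\beta_m^=$ whose index set $\{\beta_1,\ldots,\beta_m\}$ is \emph{not} 1-nested (for instance, hyperplanes coming from two different maximal 0-nested sets, or from two incomparable elements of $\mathcal{B}_1$ whose union lies in $\mathcal{B}_1$). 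If such intersections met the polytope, $\mathbf{PA}_n$ would have extra vertices, possibly non-simple ones, and the face lattice would not be $C$. Knowing how many maximal 1-nested sets there are cannot exclude this, because you do not independently know the number of vertices of the polytope.

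What is missing is precisely the hard computational core of the paper: Lemma~\ref{l4}, which shows by explicit estimates ($\Sigma>\kappa(k,l)+\kappa(m,p)$ in several cases) that two hyperplanes $\beta^=,\gamma^=$ can meet on $\mathbf{PA}_n$ only if $\beta,\gamma$ lie in a common $\mathcal{B}_1^M$, and Lemma~\ref{l5}, which (via the normalising affine map $u$ and Lemma~\ref{l3}) shows that incomparable $\beta,\gamma$ meeting on the polytope must have $\beta\cup\gamma\in C_1-\mathcal{B}_1$. Together with Proposition~\ref{p2} these yield Corollary~\ref{c2}: any set of facet hyperplanes with a common point on $\mathbf{PA}_n$ is indexed by a 1-nested set. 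Only then does Proposition~\ref{p3} identify every vertex with a 0-face of $C$, whence simplicity (Corollary~\ref{c1}) and, combined with your forward direction, the full identification of the face lattice. You should replace the counting step with an argument of this kind; as it stands, the proposal proves only that $\mathbf{PA}_n$ \emph{contains} the expected vertices, not that it equals the expected polytope.
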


For every $0\leq i\leq n$, we have that $\{\{i\}\}\in
\mathcal{B}_1$. Hence, $\mathbf{PA}_n$ is a subset of the
$n$-simplex
\[
x_0+\ldots+x_n=3^{n+1},\quad\quad x_i\geq 3,\quad 0\leq i\leq n,
\]
and therefore a polytope, and not just a polyhedron.

For $\mathcal{B}_1^M$ being defined as $\mathcal{B}_1\cap
\mathcal{P}(M)$, we have the following.

\begin{lem}\label{l4}
If $\beta,\gamma\in \mathcal{B}_1$ are such that $\beta^{=}\cap
\gamma^{=}\cap \mathbf{PA}_n\neq\emptyset$, then there is a
maximal 0-nested set $M$ such that
$\beta,\gamma\in\mathcal{B}_1^M$.
\end{lem}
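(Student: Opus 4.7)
The plan is to rewrite $\beta^{=}$ additively, observe that on $\beta^{=}\cap \mathbf{PA}_n$ each $B\in\beta$ satisfies $s(B):=\sum_{i\in B}x_i<3^{|B|}+1$, and then derive a contradiction from any incomparable pair $A\in\beta$, $B\in\gamma$ via the singleton constraints together with the modular identity $s(A)+s(B)=s(A\cap B)+s(A\cup B)$.

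A direct coefficient comparison shows that $\beta^{=}$ is equivalent to $\sum_{B\in\beta}s(B)=\kappa(k,l)$, which by Remark~\ref{r4} equals $\sum_{B\in\beta} 3^{|B|}+\varepsilon(k)$ with $0\le\varepsilon(k)<1$. Since every singleton $\{C\}$ with $\emptyset\neq C\subsetneq X$ lies in $\mathcal{B}_1$, the inequality $s(C)\ge 3^{|C|}$ holds on $\mathbf{PA}_n$. Thus the nonnegative slacks $s(B)-3^{|B|}$ sum to $\varepsilon(k)<1$, and one obtains
\[
3^{|B|}\;\le\;s(B)\;<\;3^{|B|}+1 \quad \text{for every } B\in\beta,
\]
and analogously $s(G)<3^{|G|}+1$ for every $G\in\gamma$ on $\gamma^{=}\cap \mathbf{PA}_n$.

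Fix $p\in\beta^{=}\cap\gamma^{=}\cap \mathbf{PA}_n$ and suppose for contradiction that $A\in\beta$, $B\in\gamma$ are incomparable; set $i:=|A\cap B|$, $u:=|A\cup B|$, so that $i<\min(|A|,|B|)\le\max(|A|,|B|)<u$ and $i+u=|A|+|B|$. From $u-|B|=|A|-i$ one obtains the clean identity
\[
3^i+3^u-3^{|A|}-3^{|B|}=(3^{|B|}-3^i)(3^{|A|-i}-1)\ge 4.
\]
Combined with the singleton bounds $s(A\cap B)\ge 3^i$ when $i\ge 1$ and $s(A\cup B)\ge 3^u$ when $u\le n$, and with the substitutions $s(\emptyset)=0$ or $s(X)=3^{n+1}$ (from the ambient hyperplane $\pi$) in the edge cases, the modular identity gives $s(A)+s(B)\ge 3^{|A|}+3^{|B|}+3$, contradicting the strict upper bound $s(A)+s(B)<3^{|A|}+3^{|B|}+2$ from the previous step.

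As members of $\beta$ (and of $\gamma$) are pairwise comparable by the very form of an element of $\mathcal{B}_1$, the above shows that $\beta\cup\gamma$ is a chain of subsets of $X$ of cardinalities in $\{1,\dots,n\}$, and hence extends to a maximal chain of sets of sizes $n,n-1,\dots,1$, i.e.\ to a maximal $0$-nested set $M$ containing $\beta\cup\gamma$. The main technical obstacle is the edge-case bookkeeping in the arithmetic step: when $A\cap B=\emptyset$ or $A\cup B=X$ the required singleton constraint is unavailable, but the gap of at least $4$ in the inequality comfortably absorbs the $1$-unit loss from replacing a missing constraint by $3^0=1$, while $s(X)=3^{n+1}$ in fact strengthens the bound in the other edge case.
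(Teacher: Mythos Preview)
Your argument is correct and is genuinely different from the paper's proof. Your key observation---that the defining equation $\beta^{=}$ can be rewritten as $\sum_{B\in\beta}s(B)=\sum_{B\in\beta}3^{|B|}+\varepsilon(k)$---is not used in the paper at all; it immediately yields the tight two-sided bounds $3^{|B|}\le s(B)<3^{|B|}+1$ for every $B\in\beta$ on $\beta^{=}\cap\mathbf{PA}_n$, after which the modular identity $s(A)+s(B)=s(A\cap B)+s(A\cup B)$ combined with the singleton constraints and a short convexity inequality finishes the job uniformly. The paper instead works with the original weighted form of $\beta^{=}$ and $\gamma^{=}$, adds the two left-hand sides into a single expression $\Sigma$, and then performs a detailed case analysis on how $Y=\bigcup\beta$ and $Z=\bigcup\gamma$ relate (incomparable, one strictly contained in the other with several subcases, or equal), bounding $\Sigma$ from below each time using only lower bounds of the form $\sum_{i\in S}x_i\ge 3^{|S|}$. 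Your route is shorter, more conceptual, and avoids the subcase bookkeeping entirely; the paper's route, by contrast, never needs the upper bounds on $s(B)$ and works directly with one-sided inequalities. One minor wording point: when $A\cup B=X$ you say $s(X)=3^{n+1}$ ``strengthens'' the bound, but since $|A\cup B|=n+1$ this gives exactly $3^{|A\cup B|}$, so it simply matches the needed inequality rather than improving it---the argument is unaffected.
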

\begin{proof}
Suppose that
\begin{itemize}
\item[$(\ast)$] there is no maximal 0-nested set $M$ such that
$\beta,\gamma\in\mathcal{B}_1^M$.
\end{itemize}
Let $\beta^{=}$ be the hyperplane
\[
y_1+2y_2+\ldots+k(y_k+\ldots+y_{k+l})=\kappa(k,l),
\]
and let $\gamma^{=}$ be the hyperplane
\[
z_1+2z_2+\ldots+m(z_m+\ldots+z_{m+p})=\kappa(m,p),
\]
where $Y=\{y_1,\ldots,y_{k+l}\}$ and $Z=\{z_1,\ldots,z_{m+p}\}$
are subsets of the set of variables $\{x_0,\ldots,x_n\}$. Let
$\Sigma$ be the following sum:
\[
y_1+2y_2+\ldots+k(y_k+\ldots+y_{k+l})+
z_1+2z_2+\ldots+m(z_m+\ldots+z_{m+p}).
\]
We show that $\Sigma>\kappa(k,l)+\kappa(m,p)$, for every point
$a(x_0,x_1,\ldots,x_n)\in \mathbf{PA}_n$, which entails that
$\mathbf{PA}_n$ and the hyperplane
$\Sigma=\kappa(k,l)+\kappa(m,p)$ are disjoint. Since
$\beta^{=}\cap \gamma^{=}$ is included in this hyperplane, we
conclude that $a\not\in \beta^{=}\cap \gamma^{=}$. In order to
prove this, we use the following fact several times: if
$i_1,\ldots,i_m$ are mutually distinct elements of
$\{0,\ldots,n\}$, then $\{\{i_1,\ldots,i_m\}\}\in \mathcal{B}_1$,
and for $a(x_0,x_1,\ldots,x_n)\in \mathbf{PA}_n$, we have that
$x_{i_1}+\ldots+x_{i_m}\geq 3^m$.

\vspace{2ex}

(1) If $Y$ and $Z$ are incomparable and $Y\cup
Z=\{u_1,\ldots,u_q\}$, then we have that $q>k+l,m+p$ and
\[
\Sigma\geq u_1+\ldots+ u_q\geq 3^q\geq \frac{3^{k+l+1} +
3^{m+p+1}}{2}>\kappa(k,l)+\kappa(m,p).
\]

\vspace{2ex}

(2) If $Z\subset Y$, then, by $(\ast)$, we conclude that either

\vspace{1ex}

(2.1) $z_1=y_i$ for some $i\leq k-2$ and $
\{z_2,\ldots,z_{m+p}\}\subseteq\{y_k,\ldots,y_{k+l}\}$, or

\vspace{1ex}

(2.2) $z_1=y_{k-1}$ and
$\{z_2,\ldots,z_{m+p}\}\subset\{y_k,\ldots,y_{k+l}\}$, or

\vspace{1ex}

(2.3) $\{z_2,\ldots,z_{m+p}\}\not\subseteq\{y_k,\ldots,y_{k+l}\}$.

\vspace{1ex}

In the case (2.1), we have that $k+l-i\geq p+m$, and
\begin{tabbing}
\hspace{2em}$\Sigma$ \= $\geq (y_1+\ldots+y_{l+k})+\ldots+
(y_{i-1}+\ldots+y_{l+k})+ 2(y_i+\ldots+y_{l+k})$
\\[1ex]
\> $\geq 3^{l+k}+\ldots+3^{l+k-i+2}+2\cdot 3^{l+k-i+1}$
\\[1ex]
\> $> 3^{l+k}+\ldots+3^{l+k-i+2}+ 3^{l+k-i+1}+
2(3^{l+k-i}+\ldots+1)$
\\[1ex]
\> $\geq 3^{l+k}+\ldots+3^{l+1}+1+ 3^{p+m}+ \ldots+3^{p+1}+1$,
since $k+l-i\geq p+m$
\\[1ex]
\> $> \kappa(k,l)+\kappa(m,p)$.
\end{tabbing}

In the case (2.2), we have that $l+1\geq p+m$, and we repeat the
above calculation with $i$ replaced by $k-1$.

In the case (2.3), let $i$ be the smallest element of
$\{1,\ldots,k-1\}$ such that $y_i=z_j$ for some
$j\in\{2,\ldots,m+p\}$. We have that $k\!+\!l\!-\!i\!+\!1\geq
p\!+\!m$ and
\begin{tabbing}
\hspace{2em}$\Sigma$ \= $\geq (y_1+\ldots+y_{l+k})+\ldots+
(y_{i-1}+\ldots+y_{l+k})+ 3(y_i+\ldots+y_{l+k})$
\\[1ex]
\> $\geq 3^{l+k}+\ldots+3^{l+k-i+2}+3\cdot 3^{l+k-i+1}$
\\[1ex]
\> $> 3^{l+k}+\ldots+3^{l+k-i+2}+ 2(3^{l+k-i+1}+\ldots+1)$
\\[1ex]
\> $\geq 3^{l+k}+\ldots+3^{l+1}+1+ 3^{p+m}+ \ldots+3^{p+1}+1$,
since $k\!+\!l\!-\!i\!+\!1\geq p\!+\!m$
\\[1ex]
\> $> \kappa(k,l)+\kappa(m,p)$.
\end{tabbing}

\vspace{2ex}

(3) If $Z=Y$, then, by $(\ast)$, there is $i\in\{1,\ldots,k-1\}$
such that for some $r\geq 1$, $\gamma^{=}$ is of the form
\[
y_1+\ldots+(i-1)y_{i-1}+(i+r)y_i+\ldots=\kappa(m,p),
\]
where the coefficient of every $y_j$, $j>i$, hidden in
``$\ldots$'', is greater or equal to $i$. Hence,
\begin{tabbing}
\hspace{2em}$\Sigma$ \= $\geq 2(y_1+\ldots+y_{l+k})+\ldots+
2(y_{i-1}+\ldots+y_{l+k})+ 3(y_i+\ldots+y_{l+k})$
\\[1ex]
\> $\geq 2(3^{l+k}+\ldots+3^{l+k-i+1})+ 3^{l+k-i+1}$
\\[1ex]
\> $> 2(3^{l+k}+\ldots+1)$, since
$3^{l+k-i+1}-1=2(3^{k+l-i}+\ldots+1)$
\\[1ex]
\> $> \kappa(k,l)+\kappa(m,p)$.
\end{tabbing}
\end{proof}

The following lemma is the basis of the inductive proof of
\cite[Lemma~9.1]{DP10}.

\begin{lem}\label{l3}
Let $Y$ and $Z$ be two incomparable subsets of $\{0,\ldots,n\}$.
If for every $i\in Y\cup Z$ we have that $x_i\geq 0$, and
$\sum_{i\in Y}x_i\leq 3^{|Y|}$, $\sum_{i\in Z}x_i\leq 3^{|Z|}$,
then \[\sum_{i\in Y\cup Z}x_i< 3^{|Y\cup Z|}.\]
\end{lem}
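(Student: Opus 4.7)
The plan is to reduce the statement to a purely combinatorial inequality about powers of $3$. By non-negativity of the coordinates,
\[
\sum_{i\in Y\cup Z}x_i\;=\;\sum_{i\in Y}x_i+\sum_{i\in Z}x_i-\sum_{i\in Y\cap Z}x_i\;\leq\;\sum_{i\in Y}x_i+\sum_{i\in Z}x_i\;\leq\;3^{|Y|}+3^{|Z|},
\]
so it suffices to prove $3^{|Y|}+3^{|Z|}<3^{|Y\cup Z|}$.

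For this, set $a=|Y|$, $b=|Z|$, $c=|Y\cap Z|$, and assume without loss of generality $a\leq b$. Incomparability of $Y$ and $Z$ forces $Y\not\subseteq Z$, hence $c<a$, i.e.\ $c\leq a-1$, so that $|Y\cup Z|=a+b-c\geq b+1$. Consequently
\[
3^{|Y|}+3^{|Z|}\;\leq\;2\cdot 3^b\;<\;3\cdot 3^b\;=\;3^{b+1}\;\leq\;3^{|Y\cup Z|},
\]
which gives the strict inequality claimed.

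I do not expect a serious obstacle. The only subtle point is the source of strictness: both hypothesis bounds $\sum_Y x_i\leq 3^{|Y|}$ and $\sum_Z x_i\leq 3^{|Z|}$ are non-strict and can in principle be attained, so the strict ``$<$'' in the conclusion cannot come from them. It must instead come from the slack $2\cdot 3^b<3\cdot 3^b$, which is provided precisely by incomparability: the latter ensures $|Y\cup Z|\geq\max(|Y|,|Z|)+1$, so that $3^{|Y\cup Z|}$ is at least a factor of $3$ larger than each of $3^{|Y|}$ and $3^{|Z|}$, comfortably beating their sum.
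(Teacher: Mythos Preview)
Your proof is correct. The inclusion--exclusion step together with non-negativity reduces the claim to $3^{|Y|}+3^{|Z|}<3^{|Y\cup Z|}$, and incomparability guarantees $|Y\cup Z|\geq \max(|Y|,|Z|)+1$, which gives the strict inequality via $2\cdot 3^b<3^{b+1}$. All steps are sound, including your remark on where the strictness must originate.

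As for comparison: the paper does not supply its own proof of this lemma; it merely states it and points out that it is the base case of an inductive argument appearing in \cite[Lemma~9.1]{DP10}. Your direct argument is self-contained and essentially the natural one, so there is nothing substantive to contrast.
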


In the sequel, we rely on the following affine transformation that
normalises the equations of hyperplanes localised at
$\mathcal{B}_1^M$ for
\[
M=\bigl\{\{n,\ldots,1\},\ldots,\{n,n-1\},\{n\}\bigr\}.
\]

Let $p$ be the orthogonal projection from the hyperplane $\pi$ to
the hyperplane ${\pi^0: x_0=0}$, and let $\mathbf{PA}_n^0$ be the
$p$-image of $\mathbf{PA}_n$. Since $p$ is an affine bijection,
the polytopes $\mathbf{PA}_n$ and $\mathbf{PA}_n^0$ are
combinatorially equivalent and $p$ maps a face of $\mathbf{PA}_n$
to the corresponding face of $\mathbf{PA}_n^0$.

Consider the $n\times n$ matrices $L$ and $L^{-1}$
\[
\frac{1}{3^n\!-\!n\!-\!1}\left( \begin{array}{ccccc} 1 & -1 & 0 &
\ldots & 0
\\
0 & 1 & -1 & \ldots & 0
\\
\vdots & \vdots & \vdots & \vdots & \vdots
\\
0 & 0 & 0 & \ldots & 1
\end{array}\right),
\quad (3^n\!-\!n\!-\!1)\left( \begin{array}{ccccc} 1 & 1 & 1 &
\ldots & 1
\\
0 & 1 & 1 & \ldots & 1
\\
\vdots & \vdots & \vdots & \vdots & \vdots
\\
0 & 0 & 0 & \ldots & 1
\end{array}\right)
\]
and the vector
\[
\mathbf{w}=\left(\begin{array}{c} 3^n-3^{n-1}
\\
3^{n-1}-3^{n-2}
\\
\vdots
\\
6
\\
3
\end{array} \right)
\]
Let $h:\mathbf{R}^n\str \pi^0$ be the affine bijection mapping
$(x_1',\ldots,x_n')\in \mathbf{R}^n$ into the point
$(x_0,\ldots,x_n)\in\pi^0$, such that $x_0=0$ and
\[
\left(\begin{array}{c} x_1
\\
\vdots
\\
x_n
\end{array} \right)= \mathbf{w} + L \left(\begin{array}{c}
x_1'-3
\\
\vdots
\\
x_n'-3
\end{array} \right).
\]
Let $u=h^{-1}\circ p$ and let $\mathbf{PA}_n'$ be the polytope
obtained as the $u$-image of $\mathbf{PA}_n$. The polytopes
$\mathbf{PA}_n$ and $\mathbf{PA}_n'$ are again combinatorially
equivalent.

What are the hyperplanes of $\mathbf{R}^n$ that correspond by $u$
to $\beta^=\cap\pi$, where $\beta\in\mathcal{B}_1^M$? Every such
$\beta$ is of the form
\[
\bigl\{\{n,n-1,\ldots,n-l,\ldots,n-l-k+1\},\ldots,\{n,n-1,\ldots,n-l\}\bigr\},
\]
and let us define the $u$-\emph{correspondent} of $\beta$ to be
the set $Y=\{n-l-k+1,\ldots,n-l\}$.

The hyperplane $\beta^{=}$ in $\mathbf{R}^{n+1}$ is of the form
\[
x_{n-l-k+1}+2x_{n-l-k+2}+\ldots+(k-1)x_{n-l-1}+k(x_{n-l}+\ldots+x_n)=\kappa(k,l),
\]
and let us denote the left-hand side of this equation by $\Xi$.
For $Y$ being the $u$-correspondent of $\beta$, we define the
hyperplane $Y^{=}$ in $\mathbf{R}^n$ by the equation
\[
\sum_{i\in Y} x'_i=3^{|Y|},\quad{\rm i.e.}\quad
x_{n-l-k+1}'+\ldots+x_{n-l}'=3^k.
\]

We show that $\beta^{=}\cap\pi^0$ is the $h$-image of $Y^{=}$. By
the definition of $h$, we have that
\[
\begin{array}{rl}
x_{n-l-k+1}\!\!\!\! & =2\cdot
3^{l+k-1}+\frac{1}{3^n-n-1}(x_{n-l-k+1}'-x_{n-l-k+2}')
\\
\vdots & \quad\quad\vdots
\\
x_{n-l}\!\!\!\! & =2\cdot
3^{l}+\frac{1}{3^n-n-1}(x_{n-l}'-x_{n-l+1}')
\\
\vdots & \quad\quad\vdots
\\
x_{n}\!\!\!\! & =3+\frac{1}{3^n-n-1}(x_{n}'-3),\; {\rm and}
\end{array}
\]
\[
\begin{array}{rl}
\Xi\!\!\!\! & =2\cdot 3^{l+k-1}+ 2\cdot 2 \cdot
3^{l+k-2}+\ldots+(k-1) 2\cdot 3^{l+1}+k(2\cdot 3^l+\ldots +2\cdot
3+3)
\\[1ex]
 & \;\;\;+ \frac{1}{3^n-n-1}(x_{n-l-k+1}'+\ldots+x_{n-l}'-3 k)
\\[2ex]
 & =3^{l+k}+3^{l+k-1}+\ldots+3^{l+1}+ \frac{1}{3^n-n-1}(x_{n-l-k+1}'+\ldots+x_{n-l}'-3
 k).
\end{array}
\]
Hence,
\[
\begin{array}{rcl}
\Xi=\kappa(k,l) & \Leftrightarrow &
\frac{1}{3^n-n-1}(x_{n-l-k+1}'+\ldots+x_{n-l}'-3k)=
\frac{3^k-3k}{3^n-n-1}
\\[1ex]
 & \Leftrightarrow & x_{n-l-k+1}'+\ldots+x_{n-l}'= 3^k,
\end{array}
\]
which means that $\beta^{=}\cap\pi^0$ is the $h$-image of $Y^{=}$,
i.e.\ the hyperplane $Y^=$ of $\mathbf{R}^n$ corresponds by $u$ to
$\beta^=\cap\pi$. The same holds when the superscript ``$=$'' is
replaced by ``$\geq$'' in which case the equation $Y^=$ is
replaced by the appropriate inequality.

Hence the $u$-correspondence between the members of
$\mathcal{B}_1^M$ and the members of the set
\[
\mathcal{B}=\bigl\{\{n-l,\ldots,n-l-k+1\}\mid 1\leq k\leq k+l\leq
n\bigr\},
\]
which is a bijection, is lifted to a bijection between a set of
hyperplanes (halfspaces) tied to $\mathbf{PA}_n$ and a set of
hyperplanes (halfspaces) tied to $\mathbf{PA}_n'$.

Note that $\mathcal{B}$ is the set of all non-empty connected sets
of vertices of the path graph $1-2-\ldots-n$ and it is a building
set of $\mathcal{P}(\{1,\ldots,n\})$. It is easy to check that the
following implications and its converses hold. If two members of
$\mathcal{B}_1^M$ are incomparable, then the $u$-correspondents
are incomparable. If the union of two members of $\mathcal{B}_1^M$
is in $\mathcal{B}_1^M$, then the union of the $u$-correspondents
is in $\mathcal{B}$. If $N\subseteq \mathcal{B}_1^M$ is 1-nested,
then the set of $u$-correspondents of members of $N$ is a nested
set with respect to $\mathcal{B}$. If $N\subseteq \mathcal{B}_1^M$
is a construction of the path graph
\[
\{n,\ldots,1\}-\ldots-\{n,n-1\}-\{n\},
\]
then the set of $u$-correspondents of members of $N$ is a
construction of the path graph $1-2-\ldots-n$.

\begin{lem}\label{l5}
If $\beta,\gamma\in \mathcal{B}_1$ are such that $\beta^{=}\cap
\gamma^{=}\cap \mathbf{PA}_n\neq\emptyset$, and $\beta$, $\gamma$
are incomparable, then $\beta\cup\gamma\in C_1-\mathcal{B}_1$.
\end{lem}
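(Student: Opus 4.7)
The plan is to combine Lemma~\ref{l4}, the $u$-correspondence developed just before the lemma, and Lemma~\ref{l3}. By Lemma~\ref{l4}, there is a maximal 0-nested set $M$ with $\beta,\gamma\in\mathcal{B}_1^M$; since $\mathbf{PA}_n$ is invariant under permutations of the coordinates $x_0,\ldots,x_n$, I may assume $M$ is the standard chain $\bigl\{\{n,\ldots,1\},\ldots,\{n\}\bigr\}$, so that the transformation $u$ is directly applicable. The inclusion $\beta\cup\gamma\in C_1$ is immediate, because $\beta\cup\gamma\subseteq M$ and $M$ is itself a chain, hence $\beta\cup\gamma$ has no antichains and is vacuously 0-nested. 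Thus the substance of the lemma is the assertion that $\beta\cup\gamma\notin\mathcal{B}_1$, which I would prove by contradiction.

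Suppose $\beta\cup\gamma\in\mathcal{B}_1$. Since $\beta\cup\gamma\subseteq M$, this forces $\beta\cup\gamma\in\mathcal{B}_1^M$. Let $Y,Z\in\mathcal{B}$ be the $u$-correspondents of $\beta$ and $\gamma$. The properties of the $u$-correspondence and their converses recorded in the text give that $Y,Z$ are incomparable subsets of $\{1,\ldots,n\}$ and that $Y\cup Z\in\mathcal{B}$. Consequently the halfspace inequality $\sum_{i\in Y\cup Z} x'_i \geq 3^{|Y\cup Z|}$ is one of the defining inequalities of $\mathbf{PA}_n'$.

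Now fix $x\in\beta^{=}\cap\gamma^{=}\cap\mathbf{PA}_n$ and put $(x'_1,\ldots,x'_n)=u(x)\in\mathbf{PA}_n'$. The $u$-correspondence transports the two hyperplane equalities into $\sum_{i\in Y} x'_i = 3^{|Y|}$ and $\sum_{i\in Z} x'_i = 3^{|Z|}$, while the singleton constraints $\{j\}\in\mathcal{B}$, for $j=1,\ldots,n$, ensure $x'_j\geq 3>0$. Lemma~\ref{l3} therefore yields the strict inequality $\sum_{i\in Y\cup Z} x'_i < 3^{|Y\cup Z|}$, contradicting the halfspace bound of the preceding paragraph. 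Hence $\beta\cup\gamma\notin\mathcal{B}_1$, which together with $\beta\cup\gamma\in C_1$ gives the claim.

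The main technical point to watch is that the set-theoretic union $\beta\cup\gamma$ inside $\mathcal{B}_1^M$ really does correspond under $u$ to the set-theoretic union $Y\cup Z$ inside $\mathcal{B}$; this is a direct unpacking of the explicit bijection between $\mathcal{B}_1^M$ and $\mathcal{B}$, after which Lemma~\ref{l3} supplies all the arithmetic work with no further calculation.
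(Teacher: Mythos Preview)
Your proof is correct and follows essentially the same route as the paper: invoke Lemma~\ref{l4} to place $\beta,\gamma$ in some $\mathcal{B}_1^M$, reduce by symmetry to the standard $M$, pass via the $u$-correspondence to intervals $Y,Z\in\mathcal{B}$, and use Lemma~\ref{l3} on the coordinates of $u(x)$ to rule out $Y\cup Z\in\mathcal{B}$, hence $\beta\cup\gamma\notin\mathcal{B}_1$. The only cosmetic difference is that you phrase the last step as a direct contradiction assuming $\beta\cup\gamma\in\mathcal{B}_1$, whereas the paper first shows $Y\cup Z\notin\mathcal{B}$ and then pulls back; the content is identical.
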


\begin{proof}
From Lemma~\ref{l4}, we conclude that there is a maximal 0-nested
set $M$ such that $\beta,\gamma\in \mathcal{B}_1^M$. By symmetry,
we may assume that
\[
M=\bigl\{\{n,\ldots,1\},\ldots,\{n,n-1\},\{n\}\bigr\}.
\]
(In that case $x_0$ occurs neither in $\beta^{=}$ nor in
$\gamma^{=}$, since $\beta,\gamma\in \mathcal{B}_1^M$.)

Let $Y,Z\in\mathcal{B}$ be the $u$-correspondents of $\beta$ and
$\gamma$, respectively. Since $\beta$ and $\gamma$ are
incomparable, $Y$ and $Z$ are incomparable too. Let $a\in
\beta^{=}\cap \gamma^{=}\cap \mathbf{PA}_n$ and let $a'=u(a)$.
Then the coordinates $(x_1',\ldots,x_n')$ of $a'$ satisfy the
equations $Y^=$ and $Z^=$, and hence, the inequalities $Y^\leq$
and $Z^\leq$.

This means that $Y$, $Z$ and the coordinates of $a'$, satisfy the
conditions of Lemma~\ref{l3} (the coordinates of $a'$ are positive
since $x_i'\geq 3$ holds for every $1\leq i\leq n$). Hence, we
have that $Y\cup Z\not\in \mathcal{B}$, otherwise, the equation
$(Y\cup Z)^{\geq}$, i.e.\ $\sum_{i\in Y\cup Z} x_i'\geq 3^{|Y\cup
Z|}$ would hold, which contradicts Lemma~\ref{l3}. From this we
conclude that $\beta\cup\gamma\not\in \mathcal{B}_1$, and since
both $\beta$ and $\gamma$ are in $\mathcal{B}_1^M$, we have that
$\beta\cup\gamma\in C_1-\mathcal{B}_1$.
\end{proof}

As a consequence of Lemma~\ref{l5} and Proposition~\ref{p2} we
have the following.

\begin{cor}\label{c2}
If $\beta_1,\ldots,\beta_k\in \mathcal{B}_1$ are such that
$\beta_1^{=}\cap\ldots\cap \beta_k^{=}\cap
\mathbf{PA}_n\neq\emptyset$, then $\{\beta_1,\ldots,\beta_k\}$ is
1-nested.
\end{cor}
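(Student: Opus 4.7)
The plan is to derive the corollary directly by combining Lemma~\ref{l5} with Proposition~\ref{p2}, since almost all of the geometric work has been done. First I would observe that for any pair of indices $i,j\in\{1,\ldots,k\}$, the intersection $\beta_i^{=}\cap \beta_j^{=}\cap \mathbf{PA}_n$ contains $\beta_1^{=}\cap\ldots\cap \beta_k^{=}\cap \mathbf{PA}_n$ and is therefore nonempty.

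Next, I would fix an arbitrary pair of incomparable elements $\beta_i,\beta_j$ of $\{\beta_1,\ldots,\beta_k\}$ (if no such pair exists, the set is totally ordered, hence trivially 1-nested, and we are done). Lemma~\ref{l5} then applies and yields $\beta_i\cup\beta_j\in C_1-\mathcal{B}_1$. Since $\beta_i,\beta_j$ were an arbitrary incomparable pair, Proposition~\ref{p2} immediately identifies $\{\beta_1,\ldots,\beta_k\}$ as a 1-nested set, completing the argument.

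There is no real obstacle: the only subtlety is the trivial case where $\{\beta_1,\ldots,\beta_k\}$ is a chain with respect to $\subseteq$, which falls outside the hypothesis of Proposition~\ref{p2} but is handled vacuously (a set all of whose $N$-antichains are empty has no non-1-nested witnesses). Thus the proof is essentially one sentence once Lemma~\ref{l5} and Proposition~\ref{p2} are invoked.
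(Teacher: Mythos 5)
Your proof is correct and is precisely the argument the paper intends: the corollary is stated there as an immediate consequence of Lemma~\ref{l5} and Proposition~\ref{p2}, exactly as you combine them (including the observation that pairwise intersections inherit nonemptiness and that the chain case is vacuous).
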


\begin{prop}\label{p3}
For every vertex $v$ of $\mathbf{PA}_n$, there is a 0-face $V$ of
$C$ such that
\[
\{v\}=(\bigcap \{\beta^{=}\mid \beta\in V\})\cap \pi,
\]
and for every $\gamma\in \mathcal{B}_1-V$, $v\not\in\gamma^{=}$.
\end{prop}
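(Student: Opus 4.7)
The plan is to let $V\subseteq\mathcal{B}_1$ be the set of all $\beta$ with $v\in\beta^=$ and then to check in three stages that $V$ is a $0$-face whose associated hyperplanes, together with $\pi$, cut out the single point $v$. The second assertion of the proposition is immediate from this definition of $V$. Since $v\in\bigcap_{\beta\in V}\beta^=\cap\mathbf{PA}_n$, Corollary \ref{c2} gives that $V$ is a $1$-nested set, and hence (by the discussion after Example~\ref{e2}) that $V\subseteq\mathcal{B}_1^M$ for some maximal $0$-nested set $M$; after permuting the labels of $X$, I may assume $M=\{\{n,\ldots,1\},\ldots,\{n,n-1\},\{n\}\}$.

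Stage 1 ($|V|=n$). Because $v$ is a $0$-dimensional face of the $n$-dimensional polytope $\mathbf{PA}_n\subseteq\pi$, its tangent cone in $\pi$ is pointed and full-dimensional. In a small enough neighborhood of $v$ inside $\pi$, the inequalities $\gamma^\geq$ with $\gamma\notin V$ are strict, so the tangent cone is defined purely by the $|V|$ linear inequalities indexed by $\beta\in V$. A pointed full-dimensional cone in an $n$-dimensional space cannot be defined by fewer than $n$ half-spaces, so $|V|\geq n$; combined with Remark \ref{r1}, this forces $|V|=n$ and $V$ to be a maximal $1$-nested set, i.e., a $0$-face of $C$.

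Stage 2 (linear independence). Under the affine bijection $u$ constructed before the statement, $V$ corresponds bijectively to a subset $\mathcal{C}\subseteq\mathcal{B}$, and because $V$ is a $0$-face derived from $M$, Proposition~\ref{p1} together with the path-to-path correspondence recalled before the statement shows that $\mathcal{C}$ is a construction of the path graph $1-2-\cdots-n$, necessarily of cardinality $n$ by Remark \ref{r3}. The indicator vectors of the members of such a construction are linearly independent in $\mathbf{R}^n$: by induction on the recursive definition of construction, the maximal element $\{1,\ldots,n\}$ (whose indicator is the all-ones vector; see Remark \ref{r6}) together with the two sub-constructions supported on the disjoint coordinate subspaces $\{1,\ldots,j-1\}$ and $\{j+1,\ldots,n\}$ give an independent system. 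Hence the $n$ hyperplanes $Y^=$, $Y\in\mathcal{C}$, meet in a single point in $\mathbf{R}^n$, and pulling back through $u$ we conclude that $\bigcap_{\beta\in V}\beta^=\cap\pi$ is a single point, necessarily $v$.

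The main obstacle is Stage 2, i.e., checking that the $n$ linear forms associated with a $0$-face are linearly independent; everything else is an essentially formal use of the already-established structure (Corollary \ref{c2}, Remark \ref{r1}, and the discussion preceding Proposition~\ref{p3}) together with the standard pointed-cone criterion for a vertex of a polytope. The induction on the construction is short and uses nothing beyond its recursive definition and the splitting into disjoint-coordinate sub-constructions at the chosen vertex $j$.
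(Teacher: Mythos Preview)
Your argument is correct and is essentially the paper's own approach: use Corollary~\ref{c2} to see that the active $\beta$'s at $v$ form a $1$-nested set, use Remark~\ref{r1} to bound its size by $n$, and use the standard vertex criterion to get the other inequality and the single-point intersection. The paper compresses your Stage~1 into one sentence (``since the dimension of this space is $n+1$, one concludes that there are $\beta_1,\ldots,\beta_n$ such that $\{v\}=\beta_1^=\cap\ldots\cap\beta_n^=\cap\pi$'') and then derives $v\notin\gamma^=$ for $\gamma\notin V$ by the contrapositive of Corollary~\ref{c2}, exactly as your definition of $V$ does.

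Two small points. First, calling $\mathbf{PA}_n$ ``$n$-dimensional'' is a forward reference to Corollary~\ref{c1}, which is proved \emph{from} this proposition; fortunately you do not need it, since pointedness of the tangent cone at a vertex holds regardless of whether the polytope is full-dimensional, and pointedness alone already forces the active normals to span $\pi-v$, giving $|V|\geq n$. Second, your Stage~2 is redundant: once the tangent cone is pointed and $|V|=n$, its lineality space $\bigcap_{\beta\in V}(\beta^=\cap\pi)-v$ is already $\{0\}$, so the intersection is automatically the single point $\{v\}$. Your inductive independence argument is correct, but it reproves what Stage~1 has already delivered; this is not the ``main obstacle'' you flag it as.
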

\begin{proof}
Let $v$ be a vertex of $\mathbf{PA}_n\subseteq \mathbf{R}^{n+1}$.
Since the dimension of this space is $n+1$, one concludes that
there are $\beta_1,\ldots,\beta_n\in \mathcal{B}_1$ such that
\[
\{v\}=\beta_1^{=}\cap\ldots\cap \beta_n^{=} \cap \pi.
\]
From Corollary~\ref{c2}, it follows that
$V=\{\beta_1,\ldots,\beta_n\}$ is a 1-nested set, and since it has
$n$ elements, by Remark~\ref{r1}, it is a 0-face of $C$. It
remains to prove that for every $\gamma\in \mathcal{B}_1 -V$, we
have that $v\not\in\gamma^{=}$. This is straightforward, since by
maximality of $V$, we have that $V\cup \{\gamma\}$ is not
1-nested, and by Corollary~\ref{c2}, the vertex $v$ cannot be
in~$\gamma^{=}$.
\end{proof}

\begin{cor}\label{c1}
$\mathbf{PA}_n$ is a simple $n$-dimensional polytope.
\end{cor}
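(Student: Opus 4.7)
The plan is to derive both claims---dimension exactly $n$ and simplicity---from Proposition~\ref{p3} by a single local analysis at an arbitrary vertex. The upper bound $\dim\mathbf{PA}_n\le n$ is immediate from $\mathbf{PA}_n\subseteq\pi$, since $\pi$ has dimension $n$ in $\mathbf{R}^{n+1}$; the real work is to obtain a matching lower bound while simultaneously verifying that each vertex has the expected number of facet incidences.

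Fix a vertex $v$ of $\mathbf{PA}_n$. By Proposition~\ref{p3} together with Remark~\ref{r1}, there is a unique 0-face $V=\{\beta_1,\ldots,\beta_n\}$ of $C$, of size $n$, such that $\{v\}=\beta_1^{=}\cap\ldots\cap\beta_n^{=}\cap\pi$ and $v\notin\gamma^{=}$ for every $\gamma\in\mathcal{B}_1\setminus V$. In particular these $n{+}1$ affine hyperplanes of $\mathbf{R}^{n+1}$ meet in a single point, so their normals are linearly independent; equivalently, the $n$ hyperplanes $\beta_i^{=}\cap\pi$ are in general position at $v$ inside $\pi$. For each $i$, the line $L_i=\bigcap_{j\ne i}\beta_j^{=}\cap\pi$ therefore passes through $v$ and is not contained in $\beta_i^{=}$; moving from $v$ along $L_i$ in the direction that strictly increases the defining functional of $\beta_i$ enters $\beta_i^{>}$, while every other constraint, being strict at $v$ by the uniqueness clause of Proposition~\ref{p3}, remains strict in a neighborhood. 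A small initial segment of $L_i$ starting at $v$ therefore lies in $\mathbf{PA}_n$ and is a 1-face of $\mathbf{PA}_n$ incident to $v$.

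The $n$ edges produced have linearly independent direction vectors---one per relaxed active constraint---so they span $\pi$ locally at $v$, which forces $\dim\mathbf{PA}_n=n$. Since every edge of $\mathbf{PA}_n$ through $v$ must arise by relaxing exactly one active constraint, there are exactly $n$ edges at $v$, which is the definition of simplicity for an $n$-polytope; applying this at every vertex yields the corollary. I expect the main obstacle to be the ``at most $n$'' half of this final count: one must rule out hidden additional incidences at $v$ that would create extra edges, and this is precisely what the uniqueness clause of Proposition~\ref{p3} achieves, converting what would otherwise require a delicate transversality argument into a straightforward consequence.
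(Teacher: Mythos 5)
Your proposal is correct and takes essentially the same route as the paper, whose entire proof is that the corollary is ``obvious after Proposition~\ref{p3}'' because every 0-face has exactly $n$ elements; you have simply filled in the standard local argument (independent normals at a vertex, one edge per relaxed active constraint) that the paper leaves implicit. The only caveat, shared with the paper, is that the lower bound $\dim\mathbf{PA}_n\geq n$ presupposes that $\mathbf{PA}_n$ has a vertex at all, i.e.\ is non-empty, which is only exhibited later in Proposition~\ref{p4}.
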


\begin{proof}
This is obvious after Proposition~\ref{p3}, or we may apply
\cite[Proposition~9.7]{DP10}, since every 0-face has exactly $n$
elements.
\end{proof}

\begin{prop}\label{p4}
For every 0-face $V$ of $C$, there is a vertex $v$ of
$\mathbf{PA}_n$ such that
\[
\{v\}=(\bigcap \{\beta^{=}\mid \beta\in V\})\cap \pi.
\]
\end{prop}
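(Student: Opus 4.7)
The plan is to exhibit the required vertex $v$ explicitly by intersecting the $n$ facet-hyperplanes indexed by $V$ with $\pi$, and then to verify that this intersection lies in $\mathbf{PA}_n$. Once both are established, Corollary~\ref{c1} gives that $v$ is a vertex, since it lies on $n$ facet-defining hyperplanes of an $n$-dimensional simple polytope.

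First I would reduce to a canonical situation using the natural $S_{n+1}$-symmetry of the construction: every 0-face $V$ contains a unique maximal 0-nested set $M$ (from the discussion following Example~\ref{e2}), so by relabelling coordinates I may assume $M$ is the standard chain $\bigl\{\{n, n-1, \ldots, 1\}, \ldots, \{n\}\bigr\}$ and apply the affine bijection $u$ introduced before Lemma~\ref{l5}. Since the function $\kappa(k,l)$ depends only on $k$ and $l$, this symmetry preserves the defining inequalities of $\mathbf{PA}_n$. Under $u$, the set $V$ corresponds to a set $V'$ which, by the implications listed just before Lemma~\ref{l5} together with Proposition~\ref{p1}, is a construction of the path graph $1 - 2 - \cdots - n$ with exactly $n$ elements (Remark~\ref{r3}). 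This reduces the uniqueness problem to solving the $n \times n$ linear system $\{Y^= : Y \in V'\}$ in $\mathbf{R}^n$.

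I would solve this system by recursion on the tree structure of $V'$. By Remark~\ref{r6}, $\{1, \ldots, n\} \in V'$, yielding $x'_1 + \cdots + x'_n = 3^n$. The \emph{root} $j$ (the unique vertex not belonging to any proper subset of $V'$) splits $V' - \bigl\{\{1, \ldots, n\}\bigr\}$ into constructions $V'_L$ and $V'_R$ of the smaller path graphs on $\{1, \ldots, j-1\}$ and $\{j+1, \ldots, n\}$. The equations for the top sets of $V'_L$ and $V'_R$ (when non-empty) pin down $x'_j = 3^n - 3^{j-1} - 3^{n-j}$ (with the obvious modification when $j$ is an endpoint), and the inductive hypothesis applied to the two smaller constructions fixes the remaining coordinates uniquely. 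Setting $v = u^{-1}(v')$ then yields the unique point of $\bigl(\bigcap_{\beta \in V} \beta^=\bigr) \cap \pi$.

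The core remaining work is to verify $v \in \gamma^{\geq}$ for every $\gamma \in \mathcal{B}_1$. Equality holds by construction when $\gamma \in V$. When $\gamma \in \mathcal{B}_1^M - V$, with $u$-correspondent $Z \in \mathcal{B}$, a straightforward induction on the tree of $V'$ should produce the strict inequality $\sum_{i \in Z} x'_i > 3^{|Z|}$: take the minimal $Y^* \in V'$ containing $Z$, and either recurse into a child subtree (if $Z$ avoids the root $j^*$ of $Y^*$) or combine the explicit formula for $x'_{j^*}$ with the inductive lower bounds on the partial sums. The hard part will be the case $\gamma \notin \mathcal{B}_1^M$, since these inequalities are not aligned with the coordinate system chosen by $u$ and do not correspond to connected intervals of the path graph after the transformation. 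My approach is to rewrite the left-hand side of $\gamma^{\geq}$ as the sum of suffix sums $\sum_{r=1}^{k} S_r$, where $S_r = x_{i_r} + \cdots + x_{i_{k+l}}$, then bound each $S_r$ below by $3^{|S_r|}$ by combining the bounds already established for $\mathcal{B}_1^M$ with the large reservoir $x_0 = 3^{n+1} - \sum_{i \geq 1} x_i$ (which is close to $2 \cdot 3^n$ and absorbs slack whenever $0$ appears among the $i_j$), and finally use the small correction $\varepsilon(k) < 1$ built into $\kappa(k, l)$ via Remark~\ref{r4} to close the remaining gap. Controlling this slack uniformly across all $\gamma$ and all constructions $V'$ is the principal technical obstacle of the proof.
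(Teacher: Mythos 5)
Your plan coincides with the paper's proof for its first two stages: the reduction by symmetry to the standard maximal $0$-nested set $M$, the transfer via $u$ to the normalised coordinates, the recursive solution of the $n\times n$ system coming from the construction of the path graph $1-\cdots-n$, and the treatment of $\gamma\in\mathcal{B}_1^M-V$ via superficial elements (the paper quantifies your ``inductive lower bounds'' as $x_m'>3^{|X_i|-1}$ for $m$ the $X_i$-superficial element, then finds $X_i\supset Y$ containing that element). Up to there your proposal is essentially the paper's argument.

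The gap is exactly where you place it: the case $\gamma\in\mathcal{B}_1-\mathcal{B}_1^M$. Your proposed mechanism does not close it. Writing $\Xi=\sum_{r=1}^k S_r$ with $S_r=x_{i_r}+\cdots+x_{i_{k+l}}$ and invoking the permutohedron-type bounds $S_r\geq 3^{k+l-r+1}$ yields only $\Xi\geq 3^{k+l}+\cdots+3^{l+1}=\kappa(k,l)-\varepsilon(k)$, which is \emph{short} of the target: by Remark~\ref{r4} the correction $\varepsilon(k)\in[0,1)$ is \emph{added} to the sum of powers of three in $\kappa(k,l)$, so it works against you, not for you, and you must produce a strict surplus exceeding $\varepsilon(k)$. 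Your only source of surplus is the ``reservoir'' $x_0$, but that covers just one of the two ways $\gamma$ can fail to lie in $\mathcal{B}_1^M$. The other way --- all of $i_1,\ldots,i_{k+l}$ lie in $\{n-l-k+1,\ldots,n\}$ but in the wrong order, so that some coordinate receives a coefficient strictly larger than in the reference hyperplane --- is invisible to your suffix-sum decomposition, since each $S_r$ is a sum over a set of the same cardinality as in the aligned case and the crude bound $S_r\geq 3^{|S_r|}$ gains nothing. The paper's device for this whole case is different and is the key missing idea: since $M\in V$, the vertex $v$ lies on the hyperplane $x_1+2x_2+\cdots+nx_n=\kappa(n,0)$, whose intersection with the permutohedron $P_n$ is an explicit $(n-1)$-simplex with vertices $a_1,\ldots,a_n$; one then verifies $\gamma^>$ at each $a_i$ by a two-case computation (a low index $m\leq n-l-k$ occurs, forcing $x_m$ to be huge; or a coefficient jump $a_j>j$ occurs), and concludes $v\in\gamma^>$ by convexity. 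Without this (or a genuinely worked-out substitute giving uniform strict surpluses at $v$ itself), the proposal does not prove the proposition.
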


\begin{proof}
We show that for every 0-face $V$ of $C$, $(\bigcap
\{\beta^{=}\mid \beta\in V\})\cap \pi$ is a singleton $\{v\}$ such
that $v\in \gamma^>$, for every $\gamma\in \mathcal{B}_1-V$, from
which the proposition follows. Again, by symmetry, we may assume
that $V=\{\beta_1,\ldots,\beta_n\}\subseteq \mathcal{B}_1^M$, for
\[
M=\bigl\{\{n,\ldots,1\},\ldots,\{n,n-1\},\{n\}\bigr\}.
\]

Let $X_i$, for every $i\in\{1,\ldots,n\}$, be the
$u$-correspondent of $\beta_i\in V$, for $u$ being the affine
transformation introduced after Lemma~\ref{l3}. Since $V$ is a
construction of
\[
\{n,\ldots,1\}-\ldots-\{n,n-1\}-\{n\},
\]
we have that $N=\{X_1,\ldots,X_n\}$ is a construction of the path
graph $1-2-\ldots-n$.

By induction on $n\geq 1$, we show that $X_1^{=}\cap\ldots\cap
X_n^{=}$ is a singleton. If $n=1$, then $X_1^{=}$ is $x_1'=3$, and
we are done. If $n>1$, let $L$ and $R$ be the constructions of the
path graphs
\[
1-\ldots-(j-1)\quad {\rm and}\quad (j+1)-\ldots-n,
\]
such that $N=\{\{1,\ldots,n\}\}\cup L\cup R$. By the induction
hypothesis, we have that $\bigcap\{\beta_i^{=}\mid \beta_i\in L\}$
and $\bigcap\{\beta_i^{=}\mid \beta_i\in R\}$ are singletons in
the corresponding $(j-1)$-dimensional and $(n-j)$-dimensional
spaces. Hence, the coordinates
$x_1',\ldots,x_{j-1}',x_{j+1}',\ldots,x_n'$ of a point belonging
to the above intersection are determined, and it remains to
determine the coordinate $x_j'$. It has a unique value evaluated
from the equation $x_1'+\ldots+x_{j-1}'+x_j'+x_{j+1}'+\ldots+
x_n'=3^n$ and the values of
$x_1',\ldots,x_{j-1}',x_{j+1}',\ldots,x_n'$.

From this we conclude that
\[
(\bigcap \{\beta^{=}\mid \beta\in V\})\cap \pi
\]
is a singleton $\{v\}$. It remains to show that $v\in \gamma^>$,
for every $\gamma\in \mathcal{B}_1-V$.

Note that in the above calculation of coordinates of $u(v)$, for
the $j$th coordinate we have
\[
x_j'=3^n-(3^{j-1}+3^{n-j})>3^{n-1},
\]
since $x_1'+\ldots+x_{j-1}'=3^{j-1}$ and
$x_{j+1}'+\ldots+x_n'=3^{n-j}$. Note also that
$j\in\{1,\ldots,n\}$ was the $\{1,\ldots,n\}$-superficial element
with respect to $N$. In the same way, we may conclude that if $m$
is the $X_i$-superficial with respect to $N$, then the $m$th
coordinate of $u(v)$ satisfies $x_m'>3^{|X_i|-1}$.

For $\gamma\in \mathcal{B}_1^M -V$, let $Y=\{n-l,\ldots,n-l-k+1\}$
be its $u$-correspondent in $\mathcal{B}$. Since $Y\in
\mathcal{B}-N$, it is easy to prove, by induction on $n$, that
there exists $X_i\in N$ such that $Y\subset X_i$ and $Y$ contains
the $X_i$-superficial element with respect to $N$ (cf.\
\cite[Lemma~6.14]{DP10}). Hence, there is
$m\in\{n-l,\ldots,n-l-k+1\}$, which is the $X_i$-superficial, and
$k<|X_i|$. From the preceding paragraph, we conclude that the
$m$th coordinate of $u(v)$ satisfies $x_m'>3^{|X_i|-1}$, and since
all the other coordinates are positive, we have that $u(v)\in
Y^>$, i.e.\
\[
x_{n-l-k+1}'+\ldots+x_{n-l}'>3^{|X_i|-1}\geq 3^k,
\]
hence, $v\in \gamma^>$.

For every other $\gamma\in \mathcal{B}_1 -V$, i.e.\ when
$\gamma\in \mathcal{B}_1-\mathcal{B}_1^M$, consider the
permutohedron $P_n$ obtained as the intersection of the hyperplane
$\pi$ and the halfspaces of the form
\[
x_{i_1}+\ldots+x_{i_k}\geq 3^k,
\]
where $i_1,\ldots,i_k$ are mutually distinct elements of
$\{0,\ldots,n\}$. This permutohedron realises the simplicial
complex $C_1$. The intersection of $P_n$ and the hyperplane
\[
x_1+2x_2+\ldots+nx_n=\kappa(n,0)
\]
is an $(n-1)$-simplex. It is the convex hull of the set of points
$\{a_1,\ldots,a_n\}\subseteq \mathbf{R}^{n+1}$ (note that
$\varepsilon(n)=\frac{3^n-3n}{3^n-n-1}$):
\[
\begin{array}{l}
a_1(3^{n+1}-3^n-\varepsilon(n), 3^{n}-3^{n-1}+\varepsilon(n),
3^{n-1}-3^{n-2},\ldots,3),
\\[1ex]
a_2(3^{n+1}-3^n, 3^{n}-3^{n-1}-\varepsilon(n),
3^{n-1}-3^{n-2}+\varepsilon(n),\ldots,3),
\\[1ex]
\quad\vdots
\\[1ex]
a_n(3^{n+1}-3^n,\ldots, 3^3-3^2, 3^2-3-\varepsilon(n),
3+\varepsilon(n)).
\end{array}
\]
(The coordinates of $a_i$ are obtained as the solution of the
system
\[
\begin{array}{c r l}
 & x_1+2x_2+\ldots+n x_n & =\kappa(n,0)
\\[1ex]
 & x_0+x_1+\;\;x_2+\ldots+\;\;x_n & =3^{n+1}
\\[1ex]
 (1) & x_1+\;\; x_2+\ldots +\;\; x_n & =3^n
\\[1ex]
 (2) & x_2+\ldots + \;\;x_n & =3^{n-1}
\\[1ex]
 \vdots & &
\\[1ex]
 (n) & x_n & =3,
\end{array}
\]
with the equation $(i)$ omitted.)

Since $\gamma\in \mathcal{B}_1-\mathcal{B}_1^M$, the hyperplane
$\gamma^{=}$ is of the form
\[
x_{i_1}+2
x_{i_2}+\ldots+k(x_{i_k}+\ldots+x_{i_{k+l}})=\kappa(k,l),
\]
where $i_k<i_{k+1}<\ldots<i_{k+l}$ and
\begin{itemize}
\item[$(\ast\ast)$] $i_1,i_2,\ldots, i_{k+l}$ are not consecutive
members of the sequence $1,2,\ldots,n$.
\end{itemize}
Denote the left-hand side of the above equation by $\Xi$.

We show that for every $i\in\{1,\ldots,n\}$, the coordinates
$(x_0,\ldots,x_n)$ of $a_i$ satisfy $\gamma^>$. From this, since
$v$ is in the convex hull of $\{a_1,\ldots, a_n\}$, one obtains
that $v\in\gamma^>$.

\vspace{2ex}

(1) If for some $0\leq m\leq n-l-k$, $x_m$ occurs in $\Xi$, then
\[
\begin{array}{rl}
\Xi\!\!\!\! & > x_m > 3^{n+1-m} - 3^{n-m}-1= 2\cdot 3^{n-m} -1=
2(3^{n-m}-1)+1
\\[2ex]
 & > \frac{3}{2}(3^{n-m}-1)+1= \frac{3^{n-m+1}-3}{2}+1 \geq
 \frac{3^{l+k+1}-3}{2}+1
\\[2ex]
 & > \kappa(k,l).
\end{array}
\]

(2) If $i_1,\ldots,i_{l+k}\in \{n-l-k+1,\ldots,n\}$, then  $\Xi$
is of the form
\[
a_1x_{n-l-k+1}+ a_2 x_{n-l-k+2}+\ldots +a_{l+k}x_n,
\]
where $a_m\geq 0$, for every $0\leq m\leq l+k$. Since $(\ast\ast)$
holds, there is $j\in\{1,\ldots,k-1\}$ such that $a_j>j$ and for
every $m\in\{1,\ldots,j-1\}$, we have that $a_m=m$. Therefore,
\[
\begin{array}{rl}
\Xi\!\!\!\! & \geq x_{n-l-k+1}+\ldots+(j-1) x_{n-l-k+j-1} +
j(x_{n-l-k+j}+\ldots+ x_n)+x_{n-l-k+j}
\\[2ex]
 & = (x_{n-l-k+1}+\ldots+ x_n) +\ldots
+ (x_{n-l-k+j}+\ldots+x_n)+x_{n-l-k+j}.
\end{array}
\]

For $i$ such that $n-l-k+1\leq i\leq n-l-k+j$, the coordinates
$(x_0,\ldots,x_n)$ of the point $a_i$ satisfy the inequality
\[
(x_{n-l-k+1}+\ldots+ x_n) +\ldots + (x_{n-l-k+j}+\ldots+x_n)>
3^{l+k}+\ldots+3^{l+k-j+1}.
\]
For the other values of $i$, the above inequality converts into
equality. Also the $(n-l-k+j)$th coordinate $x_{n-l-k+j}$ of $a_i$
is greater than $3^{l+k-j+1}-3^{l+k-j}-1$. Hence,
\[
\begin{array}{rl}
\Xi\!\!\!\! & > 3^{l+k-j+1}{\displaystyle \frac{3^j-1}{2}} +
3^{l+k-j+1} -3^{l+k-j}-1
\\[2ex]
 & = {\displaystyle \frac{3^{l+k+1}-3^{l+k-j+1}+
 2\cdot 3^{l+k-j+1} -2\cdot 3^{l+k-j}}{2}}-1
 \\[2ex]
 & = {\displaystyle \frac{3^{l+k+1}+ 3^{l+k-j}}{2}}-1
 \\[2ex]
 & \geq {\displaystyle \frac{3^{l+k+1}+ 3^{l+1}}{2}}-1,\quad {\rm
 since\;} k\geq j+1
 \\[2ex]
 & > {\displaystyle \frac{3^{l+k+1}- 3^{l+1}}{2}}+1 > \kappa(k,l).
\end{array}
\]
\end{proof}

\begin{proof}[Proof of Theorem~\ref{t1}]
From Propositions~\ref{p3}, \ref{p4} and Corollary~\ref{c1}, we
conclude that the face lattice of $\mathbf{PA}_n$, with bottom
removed, is given by
\[
\Bigl(\bigcup\bigl\{\mathcal{P}(\{\beta^{=}\mid \beta\in V\})\mid
V\;\mbox{\rm is a 0-face of }C\bigr\},\supseteq \Bigr),
\]
from which the theorem follows.
\end{proof}

\begin{figure}[h!h!h!]
\includegraphics[width=1.0\textwidth]{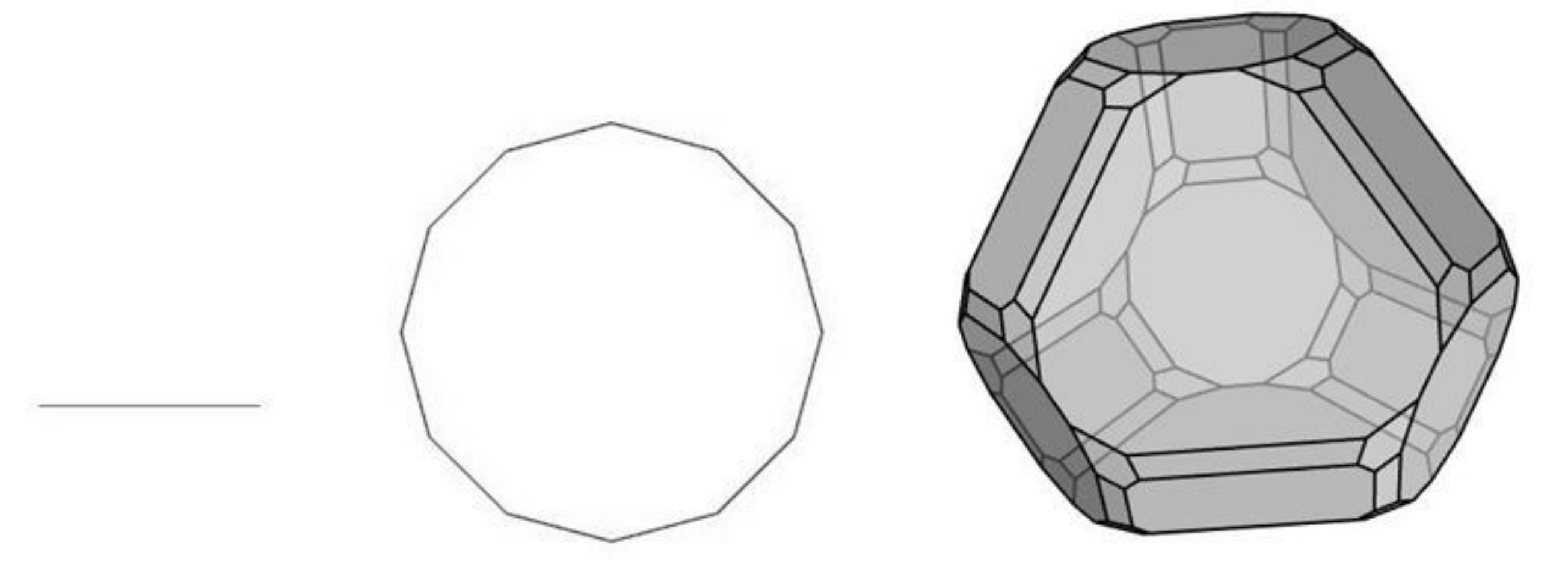}
\caption{$\mathbf{PA}_1$, $\mathbf{PA}_2$ and $\mathbf{PA}_3$}
\label{s:pasn}
\end{figure}

Figure \ref{s:pasn} illustrates the $n$-dimensional simple
permutoassociahedra for $n=1,2$ and~$3$. We conclude the paper
with a picture that indicates the connection between the
3-dimensional simple permutoassociahedron and the 3-dimensional
permutohedron. The facets of the permutohedron are labelled by the
elements of
\[
\mathcal{B}_0=\mathcal{P}(\{0,1,2,3\})-\{\{0,1,2,3\},
\emptyset\}.
\]
As it has been already noted, it is more intuitive to transform
every such $A\in\mathcal{B}_0$ into the ordered partition of
$\{0,1,2,3\}$, with $\{0,1,2,3\}-A$ as the first block and $A$ as
the second block.

\begin{center}
\begin{picture}(300,330)(0,-15)

\put(0,0){\circle*{2}} \put(300,0){\circle*{2}}
\put(0,300){\circle*{2}} \put(300,300){\circle*{2}}
\put(40,40){\circle*{2}} \put(260,40){\circle*{2}}
\put(40,260){\circle*{2}} \put(260,260){\circle*{2}}

\put(120,60){\circle*{2}} \put(180,60){\circle*{2}}
\put(100,100){\circle*{2}} \put(200,100){\circle*{2}}
\put(60,120){\circle*{2}} \put(240,120){\circle*{2}}
\put(130,130){\circle*{2}} \put(170,130){\circle*{2}}
\put(130,170){\circle*{2}} \put(170,170){\circle*{2}}
\put(60,180){\circle*{2}} \put(240,180){\circle*{2}}
\put(100,200){\circle*{2}} \put(200,200){\circle*{2}}
\put(120,240){\circle*{2}} \put(180,240){\circle*{2}}


\put(150,30){\makebox(0,0){$\{3\}$}}
\put(150,95){\makebox(0,0){$\{1,2,3\}$}}
\put(150,150){\makebox(0,0){$\{1,2\}$}}
\put(150,205){\makebox(0,0){$\{0,1,2\}$}}
\put(150,270){\makebox(0,0){$\{0\}$}}
\put(150,-7){\makebox(0,0){$\{0,3\}$}}
\put(150,307){\makebox(0,0){$\{0,3\}$}}
\put(-13,150){\makebox(0,0){$\{0,3\}$}}
\put(313,150){\makebox(0,0){$\{0,3\}$}}
\put(75,75){\makebox(0,0){$\{2,3\}$}}
\put(225,75){\makebox(0,0){$\{1,3\}$}}
\put(75,225){\makebox(0,0){$\{0,2\}$}}
\put(225,225){\makebox(0,0){$\{0,1\}$}}
\put(30,150){\makebox(0,0){$\{0,2,3\}$}}
\put(95,150){\makebox(0,0){$\{2\}$}}
\put(205,150){\makebox(0,0){$\{1\}$}}
\put(270,150){\makebox(0,0){$\{0,1,3\}$}}

\put(0,-5){\makebox(0,0){\scriptsize $1203$}}
\put(300,-5){\makebox(0,0){\scriptsize $2103$}}
\put(0,305){\makebox(0,0){\scriptsize $1230$}}
\put(300,305){\makebox(0,0){\scriptsize $2130$}}

\put(27,40){\makebox(0,0){\scriptsize $1023$}}
\put(273,40){\makebox(0,0){\scriptsize $2013$}}
\put(120,55){\makebox(0,0){\scriptsize $0123$}}
\put(180,55){\makebox(0,0){\scriptsize $0213$}}
\put(112,100){\makebox(0,0){\scriptsize $0132$}}
\put(188,100){\makebox(0,0){\scriptsize $0231$}}
\put(70,122){\makebox(0,0){\scriptsize $1032$}}
\put(230,122){\makebox(0,0){\scriptsize $2031$}}
\put(120,132){\makebox(0,0){\scriptsize $0312$}}
\put(180,132){\makebox(0,0){\scriptsize $0321$}}
\put(120,168){\makebox(0,0){\scriptsize $3012$}}
\put(180,168){\makebox(0,0){\scriptsize $3021$}}
\put(70,178){\makebox(0,0){\scriptsize $1302$}}
\put(230,178){\makebox(0,0){\scriptsize $2301$}}
\put(112,200){\makebox(0,0){\scriptsize $3102$}}
\put(188,200){\makebox(0,0){\scriptsize $3201$}}
\put(120,245){\makebox(0,0){\scriptsize $3120$}}
\put(180,245){\makebox(0,0){\scriptsize $3210$}}
\put(27,260){\makebox(0,0){\scriptsize $1320$}}
\put(273,260){\makebox(0,0){\scriptsize $2310$}}


\put(0,0){\line(1,0){300}} \put(0,0){\line(0,1){300}}
\put(300,0){\line(0,1){300}} \put(0,300){\line(1,0){300}}
\put(0,0){\line(1,1){40}} \put(300,0){\line(-1,1){40}}
\put(0,300){\line(1,-1){40}} \put(300,300){\line(-1,-1){40}}

\put(40,40){\line(4,1){80}} \put(40,40){\line(1,4){20}}
\put(260,40){\line(-4,1){80}} \put(260,40){\line(-1,4){20}}
\put(40,260){\line(4,-1){80}} \put(40,260){\line(1,-4){20}}
\put(260,260){\line(-4,-1){80}} \put(260,260){\line(-1,-4){20}}

\put(100,100){\line(-2,1){40}} \put(100,100){\line(1,-2){20}}
\put(200,100){\line(2,1){40}} \put(200,100){\line(-1,-2){20}}
\put(100,200){\line(-2,-1){40}} \put(100,200){\line(1,2){20}}
\put(200,200){\line(2,-1){40}} \put(200,200){\line(-1,2){20}}

\put(120,60){\line(1,0){60}} \put(120,240){\line(1,0){60}}
\put(60,120){\line(0,1){60}} \put(240,120){\line(0,1){60}}

\put(100,100){\line(1,1){30}} \put(200,100){\line(-1,1){30}}
\put(100,200){\line(1,-1){30}} \put(200,200){\line(-1,-1){30}}

\put(130,130){\line(1,0){40}} \put(130,130){\line(0,1){40}}
\put(170,170){\line(-1,0){40}} \put(170,170){\line(0,-1){40}}

\put(105,50){\textcolor{red}{\circle*{2}}}
\put(100,60){\textcolor{red}{\circle*{2}}}
\put(145,60){\textcolor{red}{\circle*{2}}}
\put(110,70){\textcolor{red}{\circle*{2}}}
\put(120,75){\textcolor{red}{\circle*{2}}}
\put(100,60){\textcolor{red}{\line(1,-2){5}}}
\put(100,60){\textcolor{red}{\line(1,1){10}}}
\put(110,70){\textcolor{red}{\line(2,1){10}}}
\put(145,60){\textcolor{red}{\line(-5,3){25}}}
\put(145,60){\textcolor{red}{\line(-4,-1){40}}}
\put(100,90){\textcolor{red}{\circle*{2}}}
\put(90,100){\textcolor{red}{\circle*{2}}}
\put(110,95){\textcolor{red}{\circle*{2}}}
\put(110,110){\textcolor{red}{\circle*{2}}}
\put(95,110){\textcolor{red}{\circle*{2}}}
\put(100,90){\textcolor{red}{\line(2,1){10}}}
\put(100,90){\textcolor{red}{\line(-1,1){10}}}
\put(110,110){\textcolor{red}{\line(-1,0){15}}}
\put(110,110){\textcolor{red}{\line(0,-1){15}}}
\put(90,100){\textcolor{red}{\line(1,2){5}}}
\put(23.5,23.5){\textcolor{red}{\circle*{2}}}
\put(65,40){\textcolor{red}{\circle*{2}}}
\put(60,50){\textcolor{red}{\circle*{2}}}
\put(50,60){\textcolor{red}{\circle*{2}}}
\put(40,65){\textcolor{red}{\circle*{2}}}
\put(65,40){\textcolor{red}{\line(-5,-2){41}}}
\put(65,40){\textcolor{red}{\line(-1,2){5}}}
\put(40,65){\textcolor{red}{\line(-2,-5){16.5}}}
\put(40,65){\textcolor{red}{\line(2,-1){10}}}
\put(50,60){\textcolor{red}{\line(1,-1){10}}}
\put(50,105){\textcolor{red}{\circle*{2}}}
\put(60,100){\textcolor{red}{\circle*{2}}}
\put(60,145){\textcolor{red}{\circle*{2}}}
\put(70,110){\textcolor{red}{\circle*{2}}}
\put(75,120){\textcolor{red}{\circle*{2}}}
\put(60,100){\textcolor{red}{\line(-2,1){10}}}
\put(60,100){\textcolor{red}{\line(1,1){10}}}
\put(70,110){\textcolor{red}{\line(1,2){5}}}
\put(60,145){\textcolor{red}{\line(3,-5){15}}}
\put(60,145){\textcolor{red}{\line(-1,-4){10}}}
\put(120,120){\textcolor{red}{\circle*{2}}}
\put(145,125){\textcolor{red}{\circle*{2}}}
\put(125,145){\textcolor{red}{\circle*{2}}}
\put(145,135){\textcolor{red}{\circle*{2}}}
\put(135,145){\textcolor{red}{\circle*{2}}}
\put(120,120){\textcolor{red}{\line(5,1){25}}}
\put(120,120){\textcolor{red}{\line(1,5){5}}}
\put(145,135){\textcolor{red}{\line(0,-1){10}}}
\put(145,135){\textcolor{red}{\line(-1,1){10}}}
\put(125,145){\textcolor{red}{\line(1,0){10}}}
\put(-5,20){\textcolor{red}{\circle*{2}}}
\put(5,20){\textcolor{red}{\circle*{2}}}
\put(20,-5){\textcolor{red}{\circle*{2}}}
\put(20,5){\textcolor{red}{\circle*{2}}}
\put(15,15){\textcolor{red}{\circle*{2}}}
\put(-5,20){\textcolor{red}{\line(1,-1){25}}}
\put(-5,20){\textcolor{red}{\line(1,0){10}}}
\put(15,15){\textcolor{red}{\line(-2,1){10}}}
\put(15,15){\textcolor{red}{\line(1,-2){5}}}
\put(20,-5){\textcolor{red}{\line(0,1){10}}}

\put(65,40){\textcolor{red}{\line(4,1){40}}}
\put(60,50){\textcolor{red}{\line(4,1){40}}}
\put(40,65){\textcolor{red}{\line(1,4){10}}}
\put(50,60){\textcolor{red}{\line(1,4){10}}}
\put(70,110){\textcolor{red}{\line(2,-1){20}}}
\put(75,120){\textcolor{red}{\line(2,-1){20}}}
\put(100,90){\textcolor{red}{\line(1,-2){10}}}
\put(110,95){\textcolor{red}{\line(1,-2){10}}}

\put(180,120){\textcolor{red}{\circle*{2}}}
\put(155,125){\textcolor{red}{\circle*{2}}}
\put(165,145){\textcolor{red}{\circle*{2}}}
\put(155,135){\textcolor{red}{\circle*{2}}}
\put(175,145){\textcolor{red}{\circle*{2}}}
\put(180,120){\textcolor{red}{\line(-5,1){25}}}
\put(180,120){\textcolor{red}{\line(-1,5){5}}}
\put(155,135){\textcolor{red}{\line(0,-1){10}}}
\put(155,135){\textcolor{red}{\line(1,1){10}}}
\put(165,145){\textcolor{red}{\line(1,0){10}}}

\put(180,180){\textcolor{red}{\circle*{2}}}
\put(155,165){\textcolor{red}{\circle*{2}}}
\put(165,155){\textcolor{red}{\circle*{2}}}
\put(155,175){\textcolor{red}{\circle*{2}}}
\put(175,155){\textcolor{red}{\circle*{2}}}
\put(180,180){\textcolor{red}{\line(-5,-1){25}}}
\put(180,180){\textcolor{red}{\line(-1,-5){5}}}
\put(155,175){\textcolor{red}{\line(0,-1){10}}}
\put(155,165){\textcolor{red}{\line(1,-1){10}}}
\put(165,155){\textcolor{red}{\line(1,0){10}}}

\put(120,180){\textcolor{red}{\circle*{2}}}
\put(145,165){\textcolor{red}{\circle*{2}}}
\put(125,155){\textcolor{red}{\circle*{2}}}
\put(145,175){\textcolor{red}{\circle*{2}}}
\put(135,155){\textcolor{red}{\circle*{2}}}
\put(120,180){\textcolor{red}{\line(5,-1){25}}}
\put(120,180){\textcolor{red}{\line(1,-5){5}}}
\put(145,175){\textcolor{red}{\line(0,-1){10}}}
\put(135,155){\textcolor{red}{\line(1,1){10}}}
\put(125,155){\textcolor{red}{\line(1,0){10}}}

\put(100,210){\textcolor{red}{\circle*{2}}}
\put(90,200){\textcolor{red}{\circle*{2}}}
\put(110,205){\textcolor{red}{\circle*{2}}}
\put(110,190){\textcolor{red}{\circle*{2}}}
\put(95,190){\textcolor{red}{\circle*{2}}}
\put(110,205){\textcolor{red}{\line(-2,1){10}}}
\put(90,200){\textcolor{red}{\line(1,1){10}}}
\put(110,190){\textcolor{red}{\line(-1,0){15}}}
\put(110,190){\textcolor{red}{\line(0,1){15}}}
\put(95,190){\textcolor{red}{\line(-1,2){5}}}

\put(200,210){\textcolor{red}{\circle*{2}}}
\put(210,200){\textcolor{red}{\circle*{2}}}
\put(190,205){\textcolor{red}{\circle*{2}}}
\put(190,190){\textcolor{red}{\circle*{2}}}
\put(205,190){\textcolor{red}{\circle*{2}}}
\put(200,210){\textcolor{red}{\line(-2,-1){10}}}
\put(200,210){\textcolor{red}{\line(1,-1){10}}}
\put(190,190){\textcolor{red}{\line(1,0){15}}}
\put(190,190){\textcolor{red}{\line(0,1){15}}}
\put(205,190){\textcolor{red}{\line(1,2){5}}}

\put(210,100){\textcolor{red}{\circle*{2}}}
\put(200,90){\textcolor{red}{\circle*{2}}}
\put(190,95){\textcolor{red}{\circle*{2}}}
\put(190,110){\textcolor{red}{\circle*{2}}}
\put(205,110){\textcolor{red}{\circle*{2}}}
\put(200,90){\textcolor{red}{\line(1,1){10}}}
\put(200,90){\textcolor{red}{\line(-2,1){10}}}
\put(190,110){\textcolor{red}{\line(0,-1){15}}}
\put(190,110){\textcolor{red}{\line(1,0){15}}}
\put(205,110){\textcolor{red}{\line(1,-2){5}}}

\put(155,60){\textcolor{red}{\circle*{2}}}
\put(200,60){\textcolor{red}{\circle*{2}}}
\put(195,50){\textcolor{red}{\circle*{2}}}
\put(180,75){\textcolor{red}{\circle*{2}}}
\put(190,70){\textcolor{red}{\circle*{2}}}
\put(200,60){\textcolor{red}{\line(-1,-2){5}}}
\put(200,60){\textcolor{red}{\line(-1,1){10}}}
\put(190,70){\textcolor{red}{\line(-2,1){10}}}
\put(155,60){\textcolor{red}{\line(5,3){25}}}
\put(155,60){\textcolor{red}{\line(4,-1){40}}}

\put(145,240){\textcolor{red}{\circle*{2}}}
\put(100,240){\textcolor{red}{\circle*{2}}}
\put(105,250){\textcolor{red}{\circle*{2}}}
\put(110,230){\textcolor{red}{\circle*{2}}}
\put(120,225){\textcolor{red}{\circle*{2}}}
\put(100,240){\textcolor{red}{\line(1,2){5}}}
\put(100,240){\textcolor{red}{\line(1,-1){10}}}
\put(110,230){\textcolor{red}{\line(2,-1){10}}}
\put(145,240){\textcolor{red}{\line(-5,-3){25}}}
\put(145,240){\textcolor{red}{\line(-4,1){40}}}

\put(155,240){\textcolor{red}{\circle*{2}}}
\put(200,240){\textcolor{red}{\circle*{2}}}
\put(195,250){\textcolor{red}{\circle*{2}}}
\put(190,230){\textcolor{red}{\circle*{2}}}
\put(180,225){\textcolor{red}{\circle*{2}}}
\put(200,240){\textcolor{red}{\line(-1,2){5}}}
\put(200,240){\textcolor{red}{\line(-1,-1){10}}}
\put(190,230){\textcolor{red}{\line(-2,-1){10}}}
\put(155,240){\textcolor{red}{\line(5,-3){25}}}
\put(155,240){\textcolor{red}{\line(4,1){40}}}

\put(225,120){\textcolor{red}{\circle*{2}}}
\put(250,105){\textcolor{red}{\circle*{2}}}
\put(240,100){\textcolor{red}{\circle*{2}}}
\put(240,145){\textcolor{red}{\circle*{2}}}
\put(230,110){\textcolor{red}{\circle*{2}}}
\put(240,100){\textcolor{red}{\line(2,1){10}}}
\put(240,100){\textcolor{red}{\line(-1,1){10}}}
\put(230,110){\textcolor{red}{\line(-1,2){5}}}
\put(240,145){\textcolor{red}{\line(-3,-5){15}}}
\put(240,145){\textcolor{red}{\line(1,-4){10}}}

\put(50,195){\textcolor{red}{\circle*{2}}}
\put(60,200){\textcolor{red}{\circle*{2}}}
\put(60,155){\textcolor{red}{\circle*{2}}}
\put(70,190){\textcolor{red}{\circle*{2}}}
\put(75,180){\textcolor{red}{\circle*{2}}}
\put(60,200){\textcolor{red}{\line(-2,-1){10}}}
\put(60,200){\textcolor{red}{\line(1,-1){10}}}
\put(70,190){\textcolor{red}{\line(1,-2){5}}}
\put(60,155){\textcolor{red}{\line(3,5){15}}}
\put(60,155){\textcolor{red}{\line(-1,4){10}}}

\put(250,195){\textcolor{red}{\circle*{2}}}
\put(240,200){\textcolor{red}{\circle*{2}}}
\put(240,155){\textcolor{red}{\circle*{2}}}
\put(230,190){\textcolor{red}{\circle*{2}}}
\put(225,180){\textcolor{red}{\circle*{2}}}
\put(240,200){\textcolor{red}{\line(2,-1){10}}}
\put(240,200){\textcolor{red}{\line(-1,-1){10}}}
\put(230,190){\textcolor{red}{\line(-1,-2){5}}}
\put(240,155){\textcolor{red}{\line(-3,5){15}}}
\put(240,155){\textcolor{red}{\line(1,4){10}}}

\put(276.5,23.5){\textcolor{red}{\circle*{2}}}
\put(235,40){\textcolor{red}{\circle*{2}}}
\put(240,50){\textcolor{red}{\circle*{2}}}
\put(250,60){\textcolor{red}{\circle*{2}}}
\put(260,65){\textcolor{red}{\circle*{2}}}
\put(235,40){\textcolor{red}{\line(5,-2){41}}}
\put(235,40){\textcolor{red}{\line(1,2){5}}}
\put(260,65){\textcolor{red}{\line(2,-5){16.5}}}
\put(260,65){\textcolor{red}{\line(-2,-1){10}}}
\put(250,60){\textcolor{red}{\line(-1,-1){10}}}

\put(23.5,276.5){\textcolor{red}{\circle*{2}}}
\put(65,260){\textcolor{red}{\circle*{2}}}
\put(60,250){\textcolor{red}{\circle*{2}}}
\put(50,240){\textcolor{red}{\circle*{2}}}
\put(40,235){\textcolor{red}{\circle*{2}}}
\put(65,260){\textcolor{red}{\line(-5,2){41}}}
\put(65,260){\textcolor{red}{\line(-1,-2){5}}}
\put(40,235){\textcolor{red}{\line(-2,5){16.5}}}
\put(40,235){\textcolor{red}{\line(2,1){10}}}
\put(50,240){\textcolor{red}{\line(1,1){10}}}

\put(276.5,276.5){\textcolor{red}{\circle*{2}}}
\put(235,260){\textcolor{red}{\circle*{2}}}
\put(240,250){\textcolor{red}{\circle*{2}}}
\put(250,240){\textcolor{red}{\circle*{2}}}
\put(260,235){\textcolor{red}{\circle*{2}}}
\put(235,260){\textcolor{red}{\line(5,2){41}}}
\put(235,260){\textcolor{red}{\line(1,-2){5}}}
\put(260,235){\textcolor{red}{\line(2,5){16.5}}}
\put(260,235){\textcolor{red}{\line(-2,1){10}}}
\put(250,240){\textcolor{red}{\line(-1,1){10}}}

\put(305,20){\textcolor{red}{\circle*{2}}}
\put(295,20){\textcolor{red}{\circle*{2}}}
\put(280,-5){\textcolor{red}{\circle*{2}}}
\put(280,5){\textcolor{red}{\circle*{2}}}
\put(285,15){\textcolor{red}{\circle*{2}}}
\put(305,20){\textcolor{red}{\line(-1,-1){25}}}
\put(305,20){\textcolor{red}{\line(-1,0){10}}}
\put(285,15){\textcolor{red}{\line(2,1){10}}}
\put(285,15){\textcolor{red}{\line(-1,-2){5}}}
\put(280,-5){\textcolor{red}{\line(0,1){10}}}

\put(305,280){\textcolor{red}{\circle*{2}}}
\put(295,280){\textcolor{red}{\circle*{2}}}
\put(280,305){\textcolor{red}{\circle*{2}}}
\put(280,295){\textcolor{red}{\circle*{2}}}
\put(285,285){\textcolor{red}{\circle*{2}}}
\put(305,280){\textcolor{red}{\line(-1,1){25}}}
\put(305,280){\textcolor{red}{\line(-1,0){10}}}
\put(285,285){\textcolor{red}{\line(2,-1){10}}}
\put(285,285){\textcolor{red}{\line(-1,2){5}}}
\put(280,305){\textcolor{red}{\line(0,-1){10}}}

\put(-5,280){\textcolor{red}{\circle*{2}}}
\put(5,280){\textcolor{red}{\circle*{2}}}
\put(20,305){\textcolor{red}{\circle*{2}}}
\put(20,295){\textcolor{red}{\circle*{2}}}
\put(15,285){\textcolor{red}{\circle*{2}}}
\put(-5,280){\textcolor{red}{\line(1,1){25}}}
\put(-5,280){\textcolor{red}{\line(1,0){10}}}
\put(15,285){\textcolor{red}{\line(-2,-1){10}}}
\put(15,285){\textcolor{red}{\line(1,2){5}}}
\put(20,305){\textcolor{red}{\line(0,-1){10}}}

\put(260,65){\textcolor{red}{\line(-1,4){10}}}
\put(250,60){\textcolor{red}{\line(-1,4){10}}}
\put(260,235){\textcolor{red}{\line(-1,-4){10}}}
\put(250,240){\textcolor{red}{\line(-1,-4){10}}}
\put(40,235){\textcolor{red}{\line(1,-4){10}}}
\put(50,240){\textcolor{red}{\line(1,-4){10}}}

\put(235,40){\textcolor{red}{\line(-4,1){40}}}
\put(240,50){\textcolor{red}{\line(-4,1){40}}}
\put(235,260){\textcolor{red}{\line(-4,-1){40}}}
\put(240,250){\textcolor{red}{\line(-4,-1){40}}}
\put(65,260){\textcolor{red}{\line(4,-1){40}}}
\put(60,250){\textcolor{red}{\line(4,-1){40}}}

\put(230,110){\textcolor{red}{\line(-2,-1){20}}}
\put(225,120){\textcolor{red}{\line(-2,-1){20}}}
\put(230,190){\textcolor{red}{\line(-2,1){20}}}
\put(225,180){\textcolor{red}{\line(-2,1){20}}}
\put(70,190){\textcolor{red}{\line(2,1){20}}}
\put(75,180){\textcolor{red}{\line(2,1){20}}}

\put(200,90){\textcolor{red}{\line(-1,-2){10}}}
\put(190,95){\textcolor{red}{\line(-1,-2){10}}}
\put(200,210){\textcolor{red}{\line(-1,2){10}}}
\put(190,205){\textcolor{red}{\line(-1,2){10}}}
\put(100,210){\textcolor{red}{\line(1,2){10}}}
\put(110,205){\textcolor{red}{\line(1,2){10}}}

\put(125,155){\textcolor{red}{\line(0,-1){10}}}
\put(135,155){\textcolor{red}{\line(0,-1){10}}}
\put(145,165){\textcolor{red}{\line(1,0){10}}}
\put(145,175){\textcolor{red}{\line(1,0){10}}}
\put(165,155){\textcolor{red}{\line(0,-1){10}}}
\put(175,155){\textcolor{red}{\line(0,-1){10}}}
\put(145,125){\textcolor{red}{\line(1,0){10}}}
\put(145,135){\textcolor{red}{\line(1,0){10}}}

\put(-5,20){\textcolor{red}{\line(0,1){260}}}
\put(5,20){\textcolor{red}{\line(0,1){260}}}
\put(295,20){\textcolor{red}{\line(0,1){260}}}
\put(305,20){\textcolor{red}{\line(0,1){260}}}
\put(20,-5){\textcolor{red}{\line(1,0){260}}}
\put(20,5){\textcolor{red}{\line(1,0){260}}}
\put(20,295){\textcolor{red}{\line(1,0){260}}}
\put(20,305){\textcolor{red}{\line(1,0){260}}}

\end{picture}
\end{center}

A vertex of the permutohedron is labelled by the permutation
corresponding to the maximal 0-nested set, i.e.\ to the set of all
the facets incident to this vertex. The facets of the
permutoassociahedron correspond to the elements of
$\mathcal{B}_1$. For example, the pentagon that covers the vertex
labelled by 2301 corresponds to the set
$\{\{0,1,3\},\{0,1\},\{1\}\}$.

\begin{center}\textmd{\textbf{Acknowledgements} }
\end{center}
\medskip
We are grateful to Pierre-Louis Curien for a suggestion how to
improve the notation in our paper. We are grateful to G\" unter
M.\ Ziegler for some useful comments, including
Remark~\ref{ziegler}, and for some corrections concerning a
previous version of this paper. The first author is grateful to
the Vietnam Institute for Advanced Studies in Mathematics in Hanoi
for the hospitality, excellent working conditions and generous
financial support while finishing this research. The first, the
second and the third author were partially supported by the Grants
$174020$, III$44006$ and $174026$ of the Ministry for Education
and Science of the Republic of Serbia, respectively.

\end{document}